\newif\ifpersonal
\DeclareRobustCommand{\SkipTocEntry}[5]{} 
\numberwithin{equation}{section}
\theoremstyle{plain}
\newtheorem{theorem}[equation]{Theorem}
\newtheorem{lemma}[equation]{Lemma}
\newtheorem{question}[equation]{Question}
\newtheorem*{claim*}{Claim}
\newtheorem{proposition}[equation]{Proposition}
\newtheorem{corollary}[equation]{Corollary}
\theoremstyle{definition}
\newtheorem{example}[equation]{Example}
\theoremstyle{remark}
\newtheorem{remark}[equation]{Remark}
\newcommand{\personal}[1]{\textcolor[rgb]{0,0,1}{(Personal: #1)}}
\newcommand{\personal}[1]{\ignorespaces}
\providecommand{\abs}[1]{\lvert#1\rvert}
\newcommand{\bbA}{\mathbb A}
\newcommand{\bbC}{\mathbb C}
\newcommand{\fC}{\mathfrak C}
\newcommand{\fM}{\mathfrak M}
\newcommand{\cI}{\mathcal I}
\newcommand{\cO}{\mathcal O}
\let\save@mathaccent\mathaccent
\newcommand*\if@single[3]{%
	\setbox0\hbox{${\mathaccent"0362{#1}}^H$}%
	\setbox2\hbox{${\mathaccent"0362{\kern0pt#1}}^H$}%
	\ifdim\ht0=\ht2 #3\else #2\fi
}
\newcommand*\rel@kern[1]{\kern#1\dimexpr\macc@kerna}
\newcommand*\widebar[1]{\@ifnextchar^{{\wide@bar{#1}{0}}}{\wide@bar{#1}{1}}}
\newcommand*\wide@bar[2]{\if@single{#1}{\wide@bar@{#1}{#2}{1}}{\wide@bar@{#1}{#2}{2}}}
\newcommand*\wide@bar@[3]{%
	\begingroup
	\def\mathaccent##1##2{%
		\let\mathaccent\save@mathaccent
		\if#32 \let\macc@nucleus\first@char \fi
		\setbox\z@\hbox{$\macc@style{\macc@nucleus}_{}$}%
		\setbox\tw@\hbox{$\macc@style{\macc@nucleus}{}_{}$}%
		\dimen@\wd\tw@
		\advance\dimen@-\wd\z@
		\divide\dimen@ 3
		\@tempdima\wd\tw@
		\advance\@tempdima-\scriptspace
		\divide\@tempdima 10
		\advance\dimen@-\@tempdima
		\ifdim\dimen@>\z@ \dimen@0pt\fi
		\rel@kern{0.6}\kern-\dimen@
		\if#31
		\overline{\rel@kern{-0.6}\kern\dimen@\macc@nucleus\rel@kern{0.4}\kern\dimen@}%
		\advance\dimen@0.4\dimexpr\macc@kerna
		\let\final@kern#2%
		\ifdim\dimen@<\z@ \let\final@kern1\fi
		\if\final@kern1 \kern-\dimen@\fi
		\else
		\overline{\rel@kern{-0.6}\kern\dimen@#1}%
		\fi
	}%
	\macc@depth\@ne
	\let\math@bgroup\@empty \let\math@egroup\macc@set@skewchar
	\mathsurround\z@ \frozen@everymath{\mathgroup\macc@group\relax}%
	\macc@set@skewchar\relax
	\let\mathaccentV\macc@nested@a
	\if#31
	\macc@nested@a\relax111{#1}%
	\else
	\def\gobble@till@marker##1\endmarker{}%
	\futurelet\first@char\gobble@till@marker#1\endmarker
	\ifcat\noexpand\first@char A\else
	\def\first@char{}%
	\fi
	\macc@nested@a\relax111{\first@char}%
	\fi
	\endgroup
}
\newcommand{\PD}{\mathrm{PD}}
\newcommand{\KX}{\mathit{KX}}
\newcommand{\longto}{\longrightarrow}
\newcommand{\ZZ}{\mathbb Z}
\newcommand{\QQ}{\mathbb Q}
\newcommand{\PP}{\mathbb P}
\newcommand{\BGm}{B\Gm}
\newcommand{\loc}{\mathrm{loc}}
\newcommand{\pt}{\mathrm{pt}}
\newcommand{\tev}{\widetilde{\ev}}
\newcommand{\tzeta}{\widetilde{\zeta}}
\DeclareMathOperator{\Res}{Res}
\newcommand{\bfMap}{\mathbf{Map}}
\newcommand{\vir}{\mathrm{vir}}
\newcommand{\bcM}{\widebar{\mathcal M}}
\newcommand{\Gm}{\mathbb G_{\mathrm m}}
\newcommand{\ev}{\mathrm{ev}}
\newcommand{\id}{\mathrm{id}}
\newcommand{\GW}{\mathrm{GW}}
\providecommand{\abs}[1]{\lvert#1\rvert}
\newcommand*{\longhookrightarrow}{\ensuremath{\lhook\joinrel\relbar\joinrel\rightarrow}}
\tikzset{
  closed/.style = {decoration = {markings, mark = at position 0.5 with { \node[transform shape, xscale = .8, yscale=.4] {/}; } }, postaction = {decorate} },
  open/.style = {decoration = {markings, mark = at position 0.5 with { \node[transform shape, scale = .7] {$\circ$}; } }, postaction = {decorate} }
}
\DeclareMathOperator{\NE}{NE}
\DeclareMathOperator{\rank}{rank}
\DeclareMathOperator{\Spec}{Spec}
\begin{document}

\title{Gromov--Witten invariants with naive tangency conditions}
\author{Felix Janda}
\address{Felix JANDA, Department of Mathematics, University of Illinois Urbana--Champaign, Urbana, IL 61801, USA}
\email{fjanda@illinois.edu}
\author{Tony Yue Yu}
\address{Tony Yue YU, Department of Mathematics M/C 253-37, California Institute of Technology, 1200 E California Blvd, Pasadena, CA 91125, USA}
\email{yuyuetony@gmail.com}
\date{October 19, 2023}

\makeatletter
\@namedef{subjclassname@2020}{%
	\textup{2020} Mathematics Subject Classification}
\makeatother
\subjclass[2020]{Primary 14N35; Secondary 14N10, 14J33}
\keywords{}

\begin{abstract}
  We introduce Gromov--Witten invariants with naive tangency conditions at the marked points of the source curve.
  We then establish an explicit formula which expresses Gromov--Witten invariants with naive tangency conditions in terms of descendent Gromov--Witten invariants.
  Several examples of genus zero Gromov--Witten invariants with naive tangencies are computed in the case of curves and surfaces.
  In particular, the counts of rational curves naively tangent to an anticanonical divisor on a del Pezzo surface are studied, and via mirror symmetry, we obtain a relation to the local Gromov--Witten invariants.
\end{abstract}

\maketitle

\tableofcontents

\section{Introduction}

It is a classical problem in algebraic geometry to count curves with given genus and homology class in a smooth complex projective variety.
We also impose incidence conditions, e.g.\ the curves should pass through given points or given subvarieties.
Such counts are usually not well-defined, i.e.\ they can be infinite, or depend on the position of the subvarieties for the incidence conditions.
This is related to compactness and transversality in enumerative geometry.
We now have a variety of enumerative theories providing well-defined counts via different compactified moduli spaces (see \cite{Pandharipande_13/2_ways}).
The theory of Gromov--Witten invariants is the most general, in the sense of being defined in all dimensions.
Furthermore, they constitute the key ingredients in the mathematical formulation of mirror symmetry (see \cite{Hori_Mirror_symmetry}).

It is natural to extend the incidence conditions in Gromov--Witten theory to tangency conditions, i.e.\ requiring the curves to be tangent to given subvarieties, rather than merely passing through them.
Examples include the beautiful relationship between Gromov--Witten invariants and classical Hurwitz numbers counting ramified covers of curves (see \cite{Okounkov_Gromov-Witten_theory_Hurwitz_theory_and_completed_cycles}), the counts of $\bbA^1$-curves with tangencies giving rise to scattering diagrams (see \cite{Gross_The_tropical_vertex,Gross_Mirror_symmetry_for_log_Calabi-Yau_surfaces_I_published}), and the degeneration formula for Gromov--Witten invariants (see \cite{Li_A_degeneration_formula}).
The above curve counts with tangencies are all formulated in the language of relative Gromov--Witten theory (see \cite{Li_Stable_morphisms,Ionel_Relative_Gromov-Witten_invariants}), which was later vastly generalized to the theory of log Gromov--Witten invariants (see \cite{Gross_Logarithmic_Gromov-Witten_invariants, Ch14, AbCh14}).

The idea of curve counting with tangencies is also essential in the non-archimedean mirror symmetry program, for the construction of mirror structure constants and theta functions for log Calabi--Yau varieties (see \cite{Yu_Enumeration_of_holomorphic_cylinders_I,Yu_Enumeration_of_holomorphic_cylinders_II,Keel_Yu_The_Frobenius}).
One distinctive advantage of the non-archimedean approach is the freeness of non-archimedean analytic curves in the generic fiber, versus the log curves in the special fiber.
Thanks to this, we were able to impose the tangency conditions in a naive way using jets, without the need of target expansions or sophisticated log structures.

The theory of non-archimedean Gromov--Witten invariants, as well as its K-theoretic version, were developed in \cite{Porta_Yu_Non-archimedean_quantum_K-invariants,Porta_Yu_Non-archimedean_Gromov-Witten_invariants}, based on derived non-archimedean geometry \cite{Porta_Yu_Derived_non-archimedean_analytic_spaces,Porta_Yu_Representability_theorem,Porta_Yu_Derived_Hom_spaces}.
The flexibility of the derived approach allowed us to include naive tangency conditions with little extra effort.
To the best our knowledge, despite their simplicity, such invariants were not considered in general in the literature.

\medskip

In this paper, we set up such Gromov--Witten invariants with naive tangency conditions in the algebraic setting, using the classical language of virtual pull-backs (see~\cite{Behrend_Intrinsic_normal_cone, Manolache_Virtual_pull-backs}).
We establish an explicit formula which expresses Gromov--Witten invariants with naive tangency conditions in terms of classical descendent Gromov--Witten invariants.

\medskip

More explicitly, let $X$ be a smooth complex projective variety.
We consider the moduli stack $\bcM_{g,n}(X,\beta)$ of genus $g$, $n$-pointed stable maps into $X$ of homology class $\beta$.
For every marked point $p_i$, we fix an lci subvariety $Z_i \subset X$ (which is not necessarily a divisor) and a positive integer $m_i$.
In Section \ref{sec:naive_tangencies_definition}, we introduce a generalized evaluation map
\[ \ev_i^{m_i}\colon \bcM_{g,n}(X,\beta)\longrightarrow X_i^{m_i},\]
taking into account the $(m_i - 1)$-jets at the marked point $p_i$.
The subvariety $Z_i \subset X$ induces a derived lci inclusion $\zeta_i \colon Z_i^{m_i} \to X_i^{m_i}$ of jet spaces over the stack of prestable curves.
Taking the Gysin pushforward $\zeta_{i!}(1)$, and capping the pullbacks $(\ev_i^{m_i})^*\zeta_{i!}(1)$ with the usual virtual fundamental class $[\bcM_{g,n}(X,\beta)]^\vir$, we obtain Gromov--Witten counts of curves whose marked point $p_i$ meets $Z_i$ with tangency order at least $m_i$.
We denote the resulting numbers by $\langle m_1 Z_1, \dots, m_n Z_n \rangle_{g, \beta}^X$.

We establish the following relation between Gromov--Witten invariants with naive tangency conditions and descendent Gromov--Witten invariants.
\begin{theorem}[see Theorem {\ref{thm:naive_tangencies_to_descendents}}] \label{thm:intro:naive_tangencies_to_descendents}
  We have
  \begin{equation*}
    (\ev_i^{m_i})^* \zeta_{i!}(1)
    = \ev_i^* \iota_{i!} \prod_{k = 1}^{m_i - 1} \sum_{j = 0}^{r_i} (k\psi_i)^{r_i - j} c_j(N_i),
  \end{equation*}
  where $\iota_i$ denotes the inclusion $Z_i \hookrightarrow X$, $N_i$ is the normal bundle of $Z_i$ in $X$, and $r_i = \rank(N_i)$.
  More precisely, we view the right-hand side as a polynomial in the cotangent line class $\psi_i$ with coefficients in $A^*(Z_i)$, and apply to each coefficient the Gysin pushforward $\iota_{i!}$ and the pullback $\ev_i^*$ by the ordinary evaluation map.
\end{theorem}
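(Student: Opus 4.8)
The plan is to reduce the identity to a computation on jet spaces, where the evaluation map $\ev_i^{m_i}$ and the derived lci inclusion $\zeta_i$ become concrete, and then to unwind the Gysin pushforward $\zeta_{i!}(1)$ in terms of Chern classes using the self-intersection (excess intersection) formula. Since the statement is entirely local at the $i$-th marked point, I would fix $i$ and suppress it from the notation. The key geometric input is a factorization of $\ev_i^{m_i}\colon \bcM_{g,n}(X,\beta) \to X_i^{m_i}$ through the ordinary evaluation map: the $(m-1)$-jet evaluation to $X^m$ (the $m$-th jet space of $X$ over the stack of prestable curves) factors as the composite of $\ev_i$ with a section-like map built from the universal curve, so that pulling back along $\ev_i^{m_i}$ amounts to first pulling back along $\ev_i$ and then along a morphism that only sees the germ of the universal curve at $p_i$. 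The upshot is a Cartesian-type square relating $Z_i^{m} \hookrightarrow X^{m}$ to $Z_i \hookrightarrow X$, and the problem becomes: compute $\zeta_{!}(1)$ and pull it back.

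Next I would identify the relevant normal bundles. Over the base, the jet space $X^{m}$ is a torsor/affine bundle whose associated graded is built from $T_X$ twisted by powers of the cotangent line at $p$, and likewise $Z^{m}$ is built from $T_Z$ twisted the same way; consequently the normal bundle of $\zeta\colon Z^{m} \hookrightarrow X^{m}$ is, up to the usual filtration argument on Chern classes, $\bigoplus_{k=1}^{m-1} N \otimes (\text{$k$-th power of the cotangent line})$, where $N$ is the normal bundle of $Z$ in $X$. Here I need the derived/lci structure of $\zeta_i$ from Section~\ref{sec:naive_tangencies_definition}: the point is precisely that the derived fiber product defining $Z^{m}$ has this normal complex as a genuine bundle of the expected rank $r(m-1)$. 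Granting this, $\zeta_{!}(1) = \iota'_*\bigl(c_{\mathrm{top}}(N_{\zeta})\bigr)$ for the inclusion of the appropriate substack, and by multiplicativity of the total Chern class the top Chern class of $\bigoplus_{k=1}^{m-1} N\otimes L^{\otimes k}$ expands as $\prod_{k=1}^{m-1} \sum_{j=0}^{r} (\psi)^{r-j}\, k^{r-j}\, c_j(N)$ once we substitute $c_1(L) = \psi$; matching the exponents of $k$ and $\psi$ gives exactly the claimed $\prod_{k=1}^{m-1}\sum_{j=0}^{r_i}(k\psi_i)^{r_i-j}c_j(N_i)$.

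Then I would assemble: pulling back $\zeta_{!}(1)$ along $\ev_i^{m_i}$ via the factorization turns $\iota'_*$ into $\ev_i^*\iota_{i!}$ (using compatibility of Gysin pushforward with the base change along the square above, i.e.\ functoriality of virtual pull-backs in the sense of \cite{Manolache_Virtual_pull-backs}), turns $c_j(N_\zeta)$-ingredients into $c_j(N_i)$ pulled back from $Z_i$, and turns $c_1(L)$ into $\psi_i$, yielding the formula as a polynomial identity in $\psi_i$ with $A^*(Z_i)$-coefficients. The main obstacle I anticipate is the second step: making the identification of the normal bundle of the derived lci inclusion $\zeta_i$ on jet spaces rigorous and functorial over the stack of prestable curves, including checking that the relevant square is Tor-independent (or that the derived structure is exactly what absorbs the failure of Tor-independence), so that the Gysin pushforwards compose correctly. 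Once the jet-space geometry is set up cleanly, the Chern-class bookkeeping is a routine splitting-principle computation.
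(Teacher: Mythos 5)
Your overall strategy --- reduce to jet spaces, identify the twist by powers of the (co)tangent line at $p_i$, and finish with Chern-class bookkeeping --- is the same mechanism that drives the paper's proof, but the two central identities you state are not correct, and the step that would make them rigorous is missing. First, $\zeta_{i!}(1)$ is Poincar\'e dual to $\zeta_{i*}[Z_i^{m_i}]^{\vir}$, an operational class of degree equal to the virtual codimension $m_i r_i$ of $Z_i^{m_i}$ in $X_i^{m_i}$; it is \emph{not} equal to the pushforward of the top Chern class of the normal bundle of $\zeta_i$ (that expression has the wrong degree --- what the self-intersection/excess formula gives is $\zeta_i^*\zeta_{i!}(1)=e(N_{\zeta_i})$, which is not what you need). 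Relatedly, the normal bundle of $\zeta_i$ is $\bigoplus_{k=0}^{m_i-1} N_i\otimes (L_i^{\vee})^{\otimes k}$, of rank $m_i r_i$, \emph{including} the $k=0$ summand; the bundle $\bigoplus_{k=1}^{m_i-1} N_i\otimes (L_i^{\vee})^{\otimes k}$ that you wrote down is the \emph{excess} bundle for intersecting $\zeta_{i*}[Z_i^{m_i}]^{\vir}$ with the locus of constant jets, and the missing $k=0$ piece is precisely what becomes the extra Gysin pushforward $\iota_{i!}$ in the final formula. Your computation lands on the right answer only because these two slips compensate each other.

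Second, the factorization you invoke is false at the level of morphisms: $\ev_i^{m_i}$ does not factor through $\ev_i$, since the jet of a stable map at $p_i$ is strictly more data than its value. What is true --- and is the key lemma of the paper, after replacing $X_i^{m_i}$ by the quotient stack $[J^{m_i-1}X/\Gm]$ of Section~\ref{sec:alternative} --- is a factorization of the \emph{pullback on operational Chow groups}: because the truncation $J^{m_i-1}X\to X$ is an affine bundle, $A^*([J^{m_i-1}X/\Gm])\simeq A^*(X\times \BGm)$, and under this isomorphism $(\tev_i^{m_i})^*$ becomes $(\ev_i\times\varphi_i)^*$ with the equivariant parameter mapping to $\psi_i$ (Lemma~\ref{lem:cohomology-localizes}). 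One must then actually compute the image of $\zeta_{i!}(1)$ under this identification, i.e.\ show it is determined by its restriction to the constant-jet section and evaluate that restriction; the paper does this by writing $\tzeta_{i!}(1)=\PD^{-1}\tzeta_{i*}[J^{m_i-1}Z_i]^{\vir,\Gm}$ and applying the $\Gm$-equivariant virtual localization formula together with the excess intersection formula --- which is the rigorous form of your excess-bundle computation, and it is here that the $e(T_X\otimes\Lambda^{\otimes(-k)})$ factors cancel and the product over $k\ge 1$ together with $\iota_{i!}$ emerges. Note also that for $Z_i$ merely lci the jet space $J^{m_i-1}Z_i$ may be singular, so one must work with its $\Gm$-equivariant \emph{virtual} class and exhibit an equivariant resolution of the obstruction theory; your splitting-principle argument implicitly assumes $Z_i$ smooth. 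So the geometric picture in your proposal is the right one, but as written the identification of $\zeta_{i!}(1)$ is unjustified (and false in the stated form), and the affine-bundle/localization input that legitimizes the reduction to classes on $X\times\BGm$ is missing.
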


The above theorem enables us to convert a single naive tangency condition to descendent insertions.
Applying this relation to each marked point, we obtain a formula expressing Gromov--Witten invariants with naive tangency conditions in terms of descendent Gromov--Witten invariants.

In the special case where $Z_i$ is a Cartier divisor (see Corollary~\ref{cor:divisor}), we obtain
\[
  (\ev_i^{m_i})^* \zeta_{i!}(1)
  = \prod_{k = 0}^{m_i - 1} (k\psi_i + \ev_i^* c_1(L))
  = \sum_{k = 0}^{m_i} \genfrac{[}{]}{0pt}{1}{m_i}{k} \psi_i^{m_i - k} (\ev_i^* c_1(L))^k,
\]
where $\genfrac{[}{]}{0pt}{1}{m_i}{k}$ denotes the unsigned Stirling numbers of the first kind.
In another special case where $Z_i$ is a point in $X$ (see Example~\ref{ex:point}),
assuming $X$ has dimension $d$, we obtain
\[
  (\ev_i^{m_i})^* \zeta_{i!}(1)
  = ((m_i - 1)! \psi_i^{m_i - 1})^d \ev_i^* \PD^{-1} [\pt].
\]

Theorem \ref{thm:intro:naive_tangencies_to_descendents} may be viewed as an analog of Maulik--Pandharipande's work \cite{MaPa06}, which connects relative Gromov--Witten invariants to absolute descendent Gromov--Witten invariants via a nonexplicit recursive relation.
In contrast, the relationship between Gromov--Witten invariants with naive tangencies and descendent Gromov--Witten invariants is direct and explicit.
Thus we propose the following natural question, which was implicitly considered in Gathmann's work \cite{Ga02}.

\begin{question}
  Is there a formula expressing relative Gromov--Witten invariants in terms of Gromov--Witten invariants with naive tangency conditions (and vice versa)?
\end{question}

Here is the basic idea of the proof of Theorem~\ref{thm:intro:naive_tangencies_to_descendents}:
The evaluation space $X^{m_i}$ in the construction of naive tangencies is defined relative to the moduli stack of prestable curves, keeping track of the geometry of the source curves.
In Section~\ref{sec:alternative}, we give another construction of naive tangencies using the alternative evaluation space $[J^{m_i-1} X / \Gm]$, which is the stack quotient of the jet scheme $J^{m_i-1} X$ modulo the scaling action of $\Gm$.
In Section~\ref{sec:relation_to_descendent}, we compare the two constructions, and then apply the virtual localization formula on the stacky evaluation spaces to obtain the explicit formula in relation to the descendent invariants.

\medskip

In Section~\ref{sec:examples}, we compute several examples of genus zero Gromov--Witten invariants with naive tangencies in the case of curves and surfaces.
More specifically, in Section~\ref{sec:curves}, we consider the case where the target is a curve.
Then Gromov--Witten invariants with naive tangencies are in fact equal to the descendent invariants, and combined with the Gromov--Witten/Hurwitz correspondence
\cite{Okounkov_Gromov-Witten_theory_Hurwitz_theory_and_completed_cycles},
we find the formula
\[ \bigg\langle\prod_{i=1}^n k_i q_i\bigg\rangle_d^{\bullet X} = \sum_{\abs{\lambda}=d} \bigg(\frac{\dim\lambda}{d!}\bigg)^{2-2g(X)} \prod_{i=1}^n \frac{\mathbf p_{k_i}(\lambda)}{k_i}, \]
where the left-hand side denotes the disconnected Gromov--Witten invariants with naive tangencies, and $\mathbf p_{k_i}$ is the shifted symmetric power sum in Hurwitz theory.

We study the case of surfaces in Sections~\ref{sec:lines_tangent_to_conic}--\ref{sec:arbitrary-del-Pezzo}.
We begin with simple examples where the invariants are enumerative.
Then, in Section~\ref{sec:gathmann}, we compare to Gathmann's computation \cite{Ga05} of plane conics that are tangent to a degree $d$ plane curve at $5$ points, and find that the corresponding Gromov--Witten invariants with naive tangencies are given by
\begin{equation*}
  n_d^{\GW} = \frac 1{5!} d^8 (d^2 + 5d + 20),
\end{equation*}
which is different from the enumerative count.

Finally, in Section~\ref{sec:arbitrary-del-Pezzo}, we study the counts of rational curves naively tangent to an anticanonical divisor $D$ on a del Pezzo surface $X$, and establish a relation to the Gromov-Witten invariants of the corresponding local del Pezzo surface $\KX$, via mirror symmetry.
More specifically, let $F(q)$ denote the generating series of genus zero, one-pointed Gromov--Witten invariants of $X$ with a naive tangency of maximal order along $D$, and let $F^\loc(Q)$ denote an analogous generating series of ordinary Gromov--Witten invariants of $\KX$.
We find:
\begin{theorem}[see Theorem~\ref{prop:naive-to-log}]
  We have
  \begin{equation*}
    F(q)|_{q^\beta \mapsto (-1)^{\beta \cdot D} q^\beta} - \frac{d_X}2 I_1^2(q)= -F^\loc(Q)
  \end{equation*}
  under the mirror map
  \begin{equation*}
    Q^\beta = q^\beta \cdot \exp\big((\beta \cdot D) I_1(q)\big),
  \end{equation*}
  where $I_1$ is obtained from a hypergeometric modification of Givental's $J$-function of $X$.
\end{theorem}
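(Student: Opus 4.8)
The plan is to reduce both sides of the identity to Givental-style hypergeometric generating series attached to $X$, and then to match them through Birkhoff factorization and the mirror map. First I would unwind the naive side. For a curve class $\beta$ with $m_\beta := \beta\cdot D > 0$, the maximal naive tangency along the anticanonical divisor $D$ at the single marked point has order $m_\beta$, so Corollary~\ref{cor:divisor} gives
\[
(\ev_1^{m_\beta})^*\zeta_{1!}(1)\;=\;\prod_{k=0}^{m_\beta-1}\bigl(k\psi_1 + \ev_1^* c_1(\mathcal O_X(D))\bigr),
\]
a class of degree $m_\beta$, which matches the virtual dimension of $\bcM_{0,1}(X,\beta)$; hence $F(q) = \sum_\beta q^\beta\int_{[\bcM_{0,1}(X,\beta)]^{\vir}}\prod_{k=0}^{m_\beta-1}(k\psi_1 + \ev_1^*D)$, an explicit finite combination (with unsigned Stirling coefficients of the first kind) of one-pointed descendent invariants of $X$. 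The point to exploit is that, under the substitution $\psi_1\leftrightarrow -z$, the insertion factors as $\ev_1^*D\cdot\prod_{k=1}^{m_\beta-1}(\ev_1^*D - kz)$, which up to the sign $(-1)^{m_\beta-1}$ is precisely the Coates--Givental hypergeometric factor attached to the concave line bundle $K_X = \mathcal O_X(-D)$. Via the standard descendent-to-$J$-function dictionary (string and divisor equations), $F(q)$ is then recovered from the hypergeometric modification $I^\loc(q,z)$ of Givental's $J$-function $J_X$ of $X$.

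Next I would unwind the local side. Since $-K_X$ is ample, $R^0\pi_*f^*K_X = 0$ and $R^1\pi_*f^*K_X$ is a rank $m_\beta-1$ bundle on $\bcM_{0,0}(X,\beta)$, and the local invariants are $N_\beta^\loc = \int_{[\bcM_{0,0}(X,\beta)]^{\vir}} e(R^1\pi_*f^*K_X)$. A standard computation on the universal curve identifies the fiber pushforward of the naive-tangency class above with $(-1)^{m_\beta-1}e(R^1\pi_*f^*K_X)$ up to lower-degree terms; combined with $e(E^\vee) = (-1)^{\rank E}e(E)$, this is what will produce both the substitution $q^\beta\mapsto(-1)^{\beta\cdot D}q^\beta$ and the overall minus sign in the statement. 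On the other hand, by the Coates--Givental theorem for twisted Gromov--Witten invariants (the local mirror theorem for the Calabi--Yau threefold $\KX$ over the Fano variety $X$), the flat coordinate on the local $B$-model is $Q^\beta = q^\beta\exp\bigl((\beta\cdot D)I_1(q)\bigr)$, where $I_1(q)$ is read off from the $z^{-1}$-coefficient of $I^\loc(q,z)$, and $-F^\loc(Q)$ is obtained from $I^\loc$ by the Birkhoff factorization passing from the $I$-function to the genus zero descendent potential of $\KX$.

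Finally I would match the two. Both $F(q)|_{q^\beta\mapsto(-1)^{\beta\cdot D}q^\beta}$ and $-F^\loc(Q)$ are then expressed through the single series $I^\loc(q,z)$, and their difference is exactly the difference between the ``raw'' $z^{-1}$-data and the Birkhoff-normalized genus zero potential; tracking the normalization $I_0 = 1$ and the first (quadratic) Birkhoff correction, one finds that this difference collapses to the single term $\tfrac{d_X}{2}I_1^2(q)$, in which $d_X = K_X^2 = D^2$ enters as the Poincar\'e pairing $\int_X c_1(\mathcal O_X(D))^2$. The main obstacle will be precisely this last step: one must keep careful account of the unstable and low-degree terms, of the $z^0$ and $z^{-1}$ coefficients of the $J$- and $I$-functions, and of the divisor-equation reshuffling induced by substituting the mirror map, since these are what force both the sign substitution $q^\beta\mapsto(-1)^{\beta\cdot D}q^\beta$ and the quadratic correction $-\tfrac{d_X}{2}I_1^2$.
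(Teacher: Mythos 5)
Your plan is essentially the paper's own route: rewrite $F(q)$ via Corollary~\ref{cor:divisor} and the $\psi\mapsto z$ substitution as the residue $\Res_{z=0}\int_X$ of the hypergeometric modification $I_{\KX}$ of $J_X$ (with the sign twist $q^\beta\mapsto(-1)^{\beta\cdot D}q^\beta$), then invoke the Coates--Givental statement that $I_{\KX}$ lies on the Lagrangian cone of $\KX$, so that $I_{\KX}(q)=e^{I_1(q)D/z}J_{\KX}(0,Q)$ under the mirror map, and extract the residue, the term $\tfrac{d_X}{2}I_1^2(q)$ arising exactly from the exponential factor applied to the finite expansion $I_{\KX}=z+I_1D+I_2D^2/z$ (this is Corollary~\ref{cor:naive-vs-loc} plus Proposition~\ref{prop:compute-Floc}). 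The only caveats are cosmetic and do not affect the argument you actually rely on: the asserted class-level identification of the pushed-forward naive-tangency insertion with $(-1)^{m_\beta-1}e(R^1\pi_*f^*K_X)$ is neither needed nor literally true termwise (the comparison only holds at the level of generating series after the mirror transformation), and the sign relating $\prod_{k=0}^{m_\beta-1}(D+kz)$ to $\prod_{k=0}^{m_\beta-1}(-D-kz)$ is $(-1)^{\beta\cdot D}$, which is precisely the substitution appearing in the statement.
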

Combining with the results of \cite{GGR19}, we can compare $F(q)$ to the analogous generating series $F^{\log}(Q)$ of log (or relative) Gromov--Witten invariants.
\begin{corollary}[see Corollary~\ref{cor:naive-to-log}]
  Under the same mirror transformation, we have
  \begin{equation*}
    F(q)|_{q^\beta \mapsto (-1)^{\beta \cdot D} q^\beta} - \frac{d_X}2 I_1^2(q) = F^{\log}(Q)|_{Q^\beta \mapsto (-1)^{\beta \cdot D} Q^\beta}
  \end{equation*}
\end{corollary}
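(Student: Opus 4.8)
The plan is to deduce the Corollary by chaining the identity of Theorem~\ref{prop:naive-to-log} with the genus-zero log--local correspondence of van Garrel--Graber--Ruddat \cite{GGR19}. Theorem~\ref{prop:naive-to-log} already gives
\[
  F(q)|_{q^\beta \mapsto (-1)^{\beta \cdot D} q^\beta} - \frac{d_X}2 I_1^2(q) = -F^\loc(Q)
\]
under the mirror map $Q^\beta = q^\beta \exp\big((\beta\cdot D)I_1(q)\big)$, so it suffices to establish the purely enumerative identity
\[
  -F^\loc(Q) = F^{\log}(Q)\big|_{Q^\beta \mapsto (-1)^{\beta\cdot D} Q^\beta}
\]
with \emph{no} further change of variables, and then substitute it into the first display while keeping the same mirror map.

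To prove the second display, fix a smooth anticanonical divisor $D$ on the del Pezzo surface $X$ (a general member of $|-K_X|$). Then $\KX$, the total space of $\omega_X$, is $\mathrm{Tot}(\mathcal O_X(-D))$, so the genus-zero Gromov--Witten invariants of $\KX$ occurring in $F^\loc$ are the local invariants of $\mathcal O_X(-D)$, which for $\beta$ with $D\cdot\beta>0$ reduce to integrals over $[\bcM_{0,1}(X,\beta)]^\vir$ (or $[\bcM_{0,0}(X,\beta)]^\vir$) of the Euler class of the obstruction bundle $R^1\pi_*f^*\mathcal O_X(-D)$ against the relevant evaluation insertion. Since $D$ is smooth and nef and $g=0$, the main theorem of \cite{GGR19} applies and identifies, class by class, these local invariants with the genus-zero maximal-tangency log (equivalently relative) invariants $N^{\log}_\beta$ of $(X,D)$, up to the explicit sign $(-1)^{D\cdot\beta-1}$ and the factor $D\cdot\beta$. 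I would then match this against the precise definitions in Section~\ref{sec:arbitrary-del-Pezzo}: the series $F^\loc$ and $F^{\log}$ are the one-pointed analogues of $F$, and the factor $D\cdot\beta$ in the \cite{GGR19} relation is compensated exactly (e.g.\ via the divisor equation relating the one-pointed local invariant to the unmarked one of \cite{GGR19}, or by the chosen weighting of $F^{\log}$ relative to the raw log invariants), while the sign $(-1)^{D\cdot\beta-1}$, combined with the overall minus sign already present, reproduces precisely the twist $Q^\beta\mapsto(-1)^{\beta\cdot D}Q^\beta$. Summing over $\beta$ then yields the second display.

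The main --- and essentially the only --- difficulty is bookkeeping: pinning down the dictionary between the conventions of \cite{GGR19} (which curve classes are allowed, the presence of a marked point and of a point/divisor insertion, the exact power of $-1$, and where the factor $D\cdot\beta$ sits) and the definitions of $F$, $F^\loc$, $F^{\log}$ fixed earlier, together with the elementary check that a smooth anticanonical $D$ may be chosen so that \cite{GGR19} applies verbatim. Once this is in place, the Corollary is immediate by substituting the second display into the first; there is no geometric input beyond Theorem~\ref{prop:naive-to-log} and \cite{GGR19}.
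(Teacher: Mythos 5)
Your proposal is correct and follows essentially the same route as the paper: combine Theorem~\ref{prop:naive-to-log} with the log--local correspondence of \cite{GGR19}, using the divisor equation to convert the unmarked local invariants of \cite{GGR19} into the one-pointed invariants defining $F^\loc$, so that the sign $(-1)^{\beta\cdot D-1}$ together with the overall minus sign yields exactly $F^{\log}(Q)|_{Q^\beta \mapsto (-1)^{\beta\cdot D} Q^\beta} = -F^{\loc}(Q)$, with no further change of variables. This is precisely the bookkeeping the paper carries out before stating the corollary.
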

In the case of $X = \PP^2$, we obtain an explicit formula
\begin{equation*}
  F(q) = 3 \sum_{d = 0}^\infty q^d \frac{(3d - 1)!}{d!^3} \sum_{i = d + 1}^{3d - 1} \frac 1i
\end{equation*}
for the generating series of Gromov--Witten invariants with naive tangencies (see Example~\ref{ex:local-P2}).

\addtocontents{toc}{\SkipTocEntry}
\subsection*{Acknowledgements}
We are grateful to M.~Kontsevich who initially suggested a possible relationship between naive tangencies and descendent invariants.
Discussions with Q.~Chen, S.~Guo, D.~Maulik, R.~Pandharipande and M.~Porta were very helpful.
Special thanks to A.~Polishchuk and Y.~Shen for organizing the workshop ``Topics in Enumerative Geometry'' at the University of Oregon in May 2022, during which we first exchanged ideas.
F.~Janda was partially supported by NSF grants DMS-2054830 and DMS-2239320.
T.Y.~Yu was partially supported by NSF grants DMS-2302095 and DMS-2245099.

\section{Review of Chow groups of algebraic stacks} \label{sec:Chow_groups}

Throughout the paper, we work over the field $\bbC$ of complex numbers.
We make use of rational Chow groups $A_*$ of algebraic stacks in the sense of Kresch \cite{Kr99}, and the generalization by Bae--Schmitt \cite[Appendix~A]{BaSc22} from stacks of finite type to stacks locally of finite type\footnote{The generalization to stacks locally of finite type is only needed in Section~\ref{sec:naive_tangencies_definition}.
It is possible to circumvent it by restricting to certain open substacks of finite type, but this would complicate the notation.}.
For a quotient stack $X = [Y / G]$, Kresch's Chow groups $A_*(X)$ agree with the equivariant Chow groups $A_*^G(Y)$ of Edidin--Graham \cite{EG98}.
We will also use the operational Chow groups $A^*$ from \cite[Appendix~C]{BaSc22}, which we recall here:

Let $X$ be an algebraic stack locally of finite type that admits a stratification by global quotient stacks in the sense of Kresch \cite{Kr99}.
An operational class $c \in A^*(X)$ is a collection of homomorphisms
\begin{equation*}
  c(g)\colon A_*(B) \to A_*(B)
\end{equation*}
for all morphisms $B \to X$ where $B$ is an algebraic stack of finite type over $\bbC$, stratified by global quotient stacks.
The collection is required to be compatible with representable proper pushforward,
flat pullback and refined Gysin pullback along representable lci morphisms.
By \cite[Corollary~C.7]{BaSc22}, the group $A^*(X)$ forms a graded ring with unit $1 \in A^0(X)$ defined by the identity operation, with a grading such that elements of $A^k(X)$ decrease degree by $k$.
Examples of elements in $A^k(X)$ include the Chern classes $c_k(E)$ of any vector bundle $E$ on $X$.

We next define a Gysin pushforward operation for morphisms admitting a relative perfect obstruction theory.
Let $X$ and $Y$ be algebraic stacks locally of finite type over $k$ which are stratified by global quotient stacks, and $f\colon X \to Y$ be a proper morphism equipped with a relative perfect obstruction theory in the sense of Behrend--Fantechi \cite{Behrend_Intrinsic_normal_cone}.

For any cartesian diagram
\begin{equation*}
  \xymatrix{
    X' \ar[r]^{f'} \ar[d] & Y' \ar[d] \\
    X \ar[r]_f & Y
  }
\end{equation*}
with $X'$ and $Y'$ admitting stratifications by global quotient stacks, we obtain a virtual pullback $f^!\colon A_*(Y') \to A_*(X')$ in the sense of Manolache \cite{Manolache_Virtual_pull-backs}.

Now, given a class $\alpha \in A^*(X)$, represented by operations $\alpha\colon A_*(X') \to A_*(X')$, we define the \emph{Gysin pushforward} $f_!\alpha \in A^*(Y)$ via the operations
\begin{equation*}
  f_! \alpha := f_*' \circ \alpha \circ f^! \colon A_*(Y') \to A_*(Y').
\end{equation*}
It follows from the properties of virtual pullbacks \cite[Theorems~4.1,
4.3]{Manolache_Virtual_pull-backs} that this is an operational class (see also \cite[Theorem~4.4]{Manolache_Virtual_pull-backs}).
The Gysin pushforward is a morphism of vector spaces that preserves neither the grading nor the ring structure.
\begin{remark}
  When $f$ is an lci morphism of schemes, the Gysin pushforward $f_!$ associated to the canonical obstruction theory of $f$ agrees with the Gysin pushforward of Fulton \cite[p.~328, (G\textsubscript{2})]{Fulton_Intersection_theory}.
\end{remark}

By \cite[Lemma~C.6]{BaSc22}, if $X$ is smooth equidimensional of dimension $n$, there is a fundamental class $[X] \in A_n(X)$, and a Poincar\'e duality isomorphism
\begin{equation*}
  \PD\colon A^*(X) \to A_*(X), \quad \alpha \mapsto \alpha \cap [X].
\end{equation*}

In the following, we will only consider Gysin pushforwards of the unit $1$ and Chern classes.
In these cases, $f_!(\alpha)$ can be computed in terms of the virtual fundamental class.

\begin{lemma}
  \label{lem:explicit-Gysin-pushforward}
  Let $X$ and $Y$ be algebraic stacks locally of finite type over $k$ which are stratified by global quotient stacks, and let $f\colon X \to Y$ be a proper morphism equipped with a relative perfect obstruction theory.
  Furthermore, assume that $Y$ is smooth and equidimensional, so that we obtain a virtual fundamental class $[X]^\vir = f^! [Y]$ via virtual pullback.
  Then,
  \begin{equation*}
    f_!(1) = \PD^{-1} f_* [X]^\vir,
  \end{equation*}
  where $1 \in A^*(X)$ is the unit.
  More generally, for any vector bundle $E$ on $X$, we have
  \begin{equation*}
    f_!(c_k(E)) = \PD^{-1} f_* \big(c_k(E) \cap [X]^\vir\big),
  \end{equation*}
\end{lemma}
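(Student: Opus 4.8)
\emph{Proof strategy.}
The plan is to reduce the two claimed identities, which live in the operational Chow ring $A^*(Y)$, to identities in $A_*(Y)$ by means of Poincar\'e duality, and then to verify those by unwinding the definition of the Gysin pushforward at the identity morphism $\id_Y$. Since $Y$ is smooth and equidimensional, $\PD\colon A^*(Y)\to A_*(Y)$, $\alpha\mapsto\alpha\cap[Y]$, is an isomorphism by \cite[Lemma~C.6]{BaSc22}, hence injective; so it is enough to prove that $f_!(1)\cap[Y]=f_*[X]^\vir$ and $f_!(c_k(E))\cap[Y]=f_*(c_k(E)\cap[X]^\vir)$. In fact I would establish the single, more uniform identity $f_!(\alpha)\cap[Y]=f_*(\alpha\cap[X]^\vir)$ for every $\alpha\in A^*(X)$; the two statements of the lemma are then its specializations to $\alpha=1$ (which acts as the identity operation) and $\alpha=c_k(E)$ (which acts by capping with $c_k(E)$), after applying $\PD^{-1}$.

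For the uniform identity, I would use that capping an operational class $c\in A^*(Y)$ with $[Y]$ amounts to evaluating the component $c(\id_Y)\colon A_*(Y)\to A_*(Y)$ on the cycle $[Y]$. By the definition of the Gysin pushforward recalled in Section~\ref{sec:Chow_groups}, applied to the cartesian square whose horizontal arrows are $f$ and whose vertical arrows are identities, the component of $f_!(\alpha)$ at $\id_Y$ is the composite $f_*\circ\alpha\circ f^!$. Evaluating on $[Y]$ and invoking the hypothesis $[X]^\vir=f^![Y]$ immediately gives $f_!(\alpha)\cap[Y]=f_*\big(\alpha\cap[X]^\vir\big)$, and applying $\PD^{-1}$ concludes.

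I do not expect a genuine obstacle here: the lemma is a formal consequence of the definitions together with \cite[Lemma~C.6]{BaSc22} and the functoriality properties of virtual pullbacks. The only point needing a little care is the case where $X$ and $Y$ are merely locally of finite type, since then $\id_Y$ is not one of the finite-type test morphisms in the definition of operational classes. I would handle this in the usual way, by restricting to finite-type open substacks $U\subseteq Y$ that are stratified by global quotient stacks, running the computation above for $f^{-1}(U)\to U$, and using the compatibility of operational classes, of the refined virtual pullbacks of \cite[Theorems~4.1, 4.3]{Manolache_Virtual_pull-backs}, and of the virtual fundamental class with the open immersions $U\hookrightarrow Y$.
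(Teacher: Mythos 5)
Your argument is correct and is essentially the paper's own proof: one computes $\PD f_!(\alpha) = f_!(\alpha)\cap[Y] = f_*\big(\alpha\cap f^![Y]\big) = f_*\big(\alpha\cap[X]^\vir\big)$ directly from the definition of the Gysin pushforward and then applies $\PD^{-1}$, with the unit case being the specialization $\alpha = 1$ (the paper phrases it as $k=0$). Your extra remark about handling the locally-of-finite-type issue by restricting to finite-type open substacks is a reasonable refinement of a point the paper leaves implicit, but it does not change the substance of the argument.
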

\begin{proof}
  We compute
  \begin{equation*}
    \PD f_!\big(c_k(E)\big)
    = f_!\big(c_k(E)\big) \cap [Y]
    = f_* \big(c_k(E) \cap f^! [Y]\big)
    = f_* \big(c_k(E) \cap [X]^\vir\big).
  \end{equation*}
  Applying $\PD^{-1}$ to both sides yields the second identity.
  The first identity follows by taking $k = 0$.
\end{proof}

\section{Gromov--Witten invariants with naive tangencies} \label{sec:naive_tangencies_definition}

In this section, we introduce a generalization of the evaluation maps taking into account the jets at the marked points, and then we define Gromov--Witten invariants with naive tangency conditions.

Given a smooth projective variety $X$, natural numbers $g, n$, and a homology class $\beta \in H_2(X; \ZZ)$, let $\bcM_{g, n}(X, \beta)$ be the moduli stack of genus $g$, $n$-pointed stable maps into $X$ of class $\beta$,
$\fC_{g, n}$ the universal curve, and $\fM_{g, n}$ the moduli stack of genus $g$, $n$-pointed pre-stable curves.
We have a natural map $\bcM_{g, n}(X, \beta) \to \fM_{g, n}$ taking every stable map to its source curve.

To define Gromov--Witten invariants with naive tangencies, we generalize the evaluation map
\[\ev_i\colon\bcM_{g,n}(X,\beta)\longto X\]
to take into account the jets at the $i$-th marked point.
Fix a positive integer $m_i$.
Let $s_i\colon\fM_{g,n}\to\fC_{g,n}$ be the $i$-th section of the universal curve, $\cI_i$ the ideal sheaf on $\fC_{g,n}$ cutting out the image of $s_i$, and $(\fC_{g,n})_{(s_i^{m_i})}$ the closed substack given by the $m_i$-th power of the ideal $\cI_i$.
Let
\[\bfMap_{\fM_{g,n}}\bigl(\fC_{g,n}, X\times\fM_{g,n}\bigr)\]
denote the derived mapping stack (see \cite[\S3.3]{DAG-XIV} and \cite{Porta_Yu_Derived_Hom_spaces}),
and let $\ev_i^{m_i}$ denote the composition of the inclusion
\[\bcM_{g,n}(X,\beta) \longhookrightarrow \bfMap_{\fM_{g,n}}\bigl(\fC_{g,n}, X\times\fM_{g,n}\bigr)\]
with the restriction
\[\bfMap_{\fM_{g,n}}\bigl(\fC_{g,n}, X\times\fM_{g,n}\bigr) \longto \bfMap_{\fM_{g,n}}\Bigl((\fC_{g,n})_{(s_i^{m_i})}, X\times \fM_{g,n}\Bigr)\eqqcolon X_i^{m_i}.\]
We call
\[\ev_i^{m_i}\colon\bcM_{g,n}(X,\beta)\longrightarrow X_i^{m_i}\]
the \emph{evaluation map of the $i$-th marked point of order $m_i$}.

Given any closed subscheme $Z_i\subset X$ such that the inclusion is lci, let
\[Z_i^{m_i}\coloneqq\bfMap_{\fM_{g,n}}\Bigl((\fC_{g,n})_{(s_i^{m_i})}, Z_i\times \fM_{g,n}\Bigr).\]
By \cite[Lemma 9.1]{Porta_Yu_Non-archimedean_quantum_K-invariants}, the derived stack $X_i^{m_i}$ is smooth over $\fM_{g,n}$, in particular underived; and the map $\zeta_i\colon Z_i^{m_i}\to X_i^{m_i}$ is derived lci.
Therefore, it follows from \cite[\S1]{Schurg_Dervied_algebraic_geometry_determinants} that $\zeta_i$ admits a relative perfect obstruction theory.
Hence, following Section \ref{sec:Chow_groups}, we have the Gysin pushforward map
\[\zeta_{i!}\colon A^*(Z_i^{m_i})\longto A^*(X_i^{m_i}) .\]
By Lemma~\ref{lem:explicit-Gysin-pushforward}, $\zeta_{i!}(1)\in A^*(X_i^{m_i})$ is Poincaré dual to the pushforward by $\zeta_{i*}$ of the virtual fundamental class $[Z_i^{m_i}]^\vir \in A_*(Z_i^{m_i})$.
Note that except in the case $(g, n) = (1, 0)$, which is irrelevant to us, by \cite[Proposition~4.5.5(ii)]{Kr99} and \cite[\S~3.1]{BaSc22}, the mapping stacks are stratified by quotient stacks, and thus have a well-behaved theory of Chow groups.

We then define the \emph{Gromov--Witten invariant of genus $g$ and class $\beta$, with naive tangencies of order $m_i$ along $Z_i$} as
\begin{align*}
  \big\langle m_1 Z_1, \dots, m_n Z_n \big\rangle_{g, \beta}^X
  &=\int_{[\bcM_{g, n}(X, \beta)]^\vir} \prod_{i = 1}^n (\ev_i^{m_i})^* \zeta_{i!}(1) \\
  &= \deg\left( \bigg(\prod_{i = 1}^n (\ev_i^{m_i})^* \zeta_{i!}(1)\bigg) \cap [\bcM_{g, n}(X, \beta)]^\vir\right),
\end{align*}
where $[\bcM_{g, n}(X, \beta)]^\vir \in A_*(\bcM_{g, n})$ denotes the virtual fundamental class.

More generally, for the $i$-th marked point, we may consider a finite collection of lci subvarieties $Z_{i, j} \subset X$, together with positive integers $m_{i, j}$, for $j = 1, \dots, k_i$.
Then we can ask for the curve to be tangent to each $Z_{i, j}$ with order at least $m_{i, j}$.
The resulting Gromov--Witten invariant with naive tangencies under this generalization is defined as
\begin{equation*}
  \bigg\langle \bigcap_{j=1}^{k_1} m_{1,j} Z_{1,j}, \dots, \bigcap_{j=1}^{k_n} m_{n,j} Z_{n,j} \bigg\rangle_{g, \beta}^X =
  \int_{[\bcM_{g, n}(X, \beta)]^\vir} \prod_{i = 1}^n \prod_{j = 1}^{k_i} (\ev_i^{m_{i, j}})^* \zeta_{i, j!}(1)
\end{equation*}

\begin{remark}
  \label{rmk:log}
  Unlike relative Gromov--Witten theory, naive tangencies may be imposed with respect to subvarieties $Z_i$ that are not divisors.
  In the case that $Z_i \subset X$ is a transverse intersection of smooth hypersurfaces $H_{i,j} \subset X$, imposing a naive tangency of order $k$ along $Z_i$ at the $i$-th marked point is the same as imposing a naive tangency of order $k$ along every $H_{i,j}$ at the same marked point.
  Thus, a naive tangency condition along $Z_i$ may be related to log Gromov--Witten invariants with respect to the snc divisor $\bigcup_j H_{i,j}$.
  This is also a motivation behind the above generalization to the collections $Z_{i,j}$.
\end{remark}

\begin{remark}
  More generally, we may also allow $Z_{i,j}$ to be derived schemes, and consider derived lci inclusions $Z_{i,j} \to X$.
  For instance, $Z_{i,j}$ can be the vanishing locus of a non-transverse section of a vector bundle.
  Furthermore, pushing-forward to the Deligne--Mumford moduli stack of curves $\bcM_{g,n}$ instead of integrating, we obtain \emph{Gromov--Witten classes with naive tangencies}.
\end{remark}

\section{Alternative evaluation space}
\label{sec:alternative}

In this section, we give an equivalent construction of Gromov--Witten invariants with naive tangencies using an alternative smaller evaluation space that still retains sufficient information about tangency orders.
We continue the setting of Section~\ref{sec:naive_tangencies_definition}.

Let $\BGm$ be the classifying stack of the multiplicative group $\Gm$, and $\varphi_i\colon \fM_{g, n} \to \BGm$ the morphisms given by the tangent line bundle $L_i$ at each marked point.
Let $\Lambda$ denote the $1$-dimensional representation of $\Gm$ with weight $1$.
Then we have $\varphi_i^* \Lambda = L_i$.

\begin{lemma} \label{lem:varphi_cartesian}
  There is a cartesian diagram
  \begin{equation*}
    \xymatrix{
      (\fC_{g, n})_{(s_i^{m_i})} \ar[r] \ar[d] & [\Spec(k[t]/(t^{m_i})) / \Gm] \ar[d] \\
      \fM_{g, n} \ar[r]^{\varphi_i} & \BGm,
    }
  \end{equation*}
  where $\Gm$ acts on $\Spec(k[t]/(t^{m_i}))$ by scaling $\lambda \cdot t = \lambda t$, and the upper horizontal arrow is given by $t \mapsto s_i$.
\end{lemma}
\begin{proof}
  The $i$-th marking of the universal curve gives a Cartier divisor $D_i \subset \fC_{g, n}$.
  The residue morphism induces a canonical trivialization $\omega(D_i)|_{D_i} \simeq \cO_{\fM_{g, n}}$, where $\omega$ is the relative dualizing sheaf.
  So we obtain $\cO(D_i)|_{D_i} \simeq L_i$.
  Let $F$ be the fiber product fitting into the cartesian diagram
  \begin{equation*}
    \xymatrix{
      F \ar[r] \ar[d] & [\Spec(k[t]/(t^{m_i})) / \Gm] \ar[d] \\
      \fM_{g, n} \ar[r]^{\varphi_i} & \BGm.
    }
  \end{equation*}
  Then, $F$ is the $(m_i-1)$-th infinitesimal thickening of the $0$-section of $L_i$.
  We can check locally that the map $(\fC_{g, n})_{(s_i^{m_i})} \to F$ is an isomorphism.
\end{proof}

Let $J^{m_i-1} X = \bfMap\big(\Spec(k[t]/(t^{m_i})), X\big)$ be the space of $(m_i - 1)$-th jets of $X$.
It is equipped with a $\Gm$-action induced by the above $\Gm$-action on $\Spec(k[t]/(t^{m_i}))$.
By Lemma~\ref{lem:varphi_cartesian}, we obtain natural maps
\begin{align*}
  X^{m_i} & \xrightarrow{\: \sim\: }\bfMap_{\fM_{g,n}} \Bigl((\fC_{g,n})_{(s_i^{m_i})}, X\times \fM_{g,n}\Bigr) \\
  & \xrightarrow{\: \sim\: } \bfMap_{B\Gm} \big([\Spec(k[t]/(t^{m_i})) / \Gm], X \times B\Gm \big) \times_{B\Gm} \fM_{g,n} \\
  & \longto \bfMap_{B\Gm} \big([\Spec(k[t]/(t^{m_i})) / \Gm], X \times B\Gm \big) \\
  & \xrightarrow{\: \sim\: } [J^{m_i-1} X / \Gm].
\end{align*}
The same constructions work for any $Z_i \subset X$.

\begin{corollary}
  \label{cor:derived-pullback}
  There is a commutative diagram
  \begin{equation*}
    \xymatrix{
      & X^{m_i} \ar[rr] \ar[dd]|\hole && [J^{m_i-1}X / \Gm] \ar[dd] \\
      Z^{m_i} \ar[ru]^{\zeta_i} \ar[rd] \ar[rr] && [J^{m_i-1} Z_i / \Gm] \ar[ru]^{\tzeta_i} \ar[rd] & \\
      & \fM_{g, n} \ar[rr]^{\varphi_i} && \BGm,
    }
  \end{equation*}
  where the square and parallelograms are derived pullbacks.
\end{corollary}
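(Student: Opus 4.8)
The plan is to build the diagram piece by piece from the cartesian square of Lemma~\ref{lem:varphi_cartesian}, using the fact that derived mapping stacks relative to a base are compatible with base change. First I would recall the chain of equivalences constructed just before the statement: $X^{m_i} \simeq \bfMap_{B\Gm}\big([\Spec(k[t]/(t^{m_i}))/\Gm], X\times B\Gm\big)\times_{B\Gm}\fM_{g,n}$, and likewise $[J^{m_i-1}X/\Gm]\simeq \bfMap_{B\Gm}\big([\Spec(k[t]/(t^{m_i}))/\Gm], X\times B\Gm\big)$. These two identifications exhibit the top square of the diagram as the pullback square defining the fibre product over $B\Gm$ along $\varphi_i$, so the top square (with vertices $X^{m_i}$, $[J^{m_i-1}X/\Gm]$, $\fM_{g,n}$, $B\Gm$) is a derived pullback essentially by construction. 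The same reasoning with $Z_i$ in place of $X$ gives that the bottom square (with vertices $Z_i^{m_i}$, $[J^{m_i-1}Z_i/\Gm]$, $\fM_{g,n}$, $B\Gm$) is a derived pullback.

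Next I would address the front parallelogram, the one with vertices $Z_i^{m_i}$, $X^{m_i}$, $[J^{m_i-1}Z_i/\Gm]$, $[J^{m_i-1}X/\Gm]$, whose vertical maps are $\zeta_i$ and $\tzeta_i$. Here the key point is that forming $\bfMap_{B\Gm}\big([\Spec(k[t]/(t^{m_i}))/\Gm], -\times B\Gm\big)$ is a functor, and in particular applying it to the closed derived lci inclusion $Z_i\hookrightarrow X$ produces $\tzeta_i$; pulling back along $\varphi_i$ (which is the passage to the $\fM_{g,n}$-relative mapping stacks) produces $\zeta_i$. So this parallelogram is obtained from the bottom square by base change, hence is itself a derived pullback by the pasting law for pullback squares: the composite rectangle $Z_i^{m_i}\to[J^{m_i-1}Z_i/\Gm]\to B\Gm$ agrees with $Z_i^{m_i}\to X^{m_i}\to[J^{m_i-1}X/\Gm]\to B\Gm$, the outer rectangle and the right-hand square are pullbacks, so the left-hand square is too. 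Concretely, one checks that the square
\begin{equation*}
  \xymatrix{
    Z_i^{m_i} \ar[r] \ar[d]_{\zeta_i} & [J^{m_i-1}Z_i/\Gm] \ar[d]^{\tzeta_i} \\
    X^{m_i} \ar[r] & [J^{m_i-1}X/\Gm]
  }
\end{equation*}
is cartesian, and that this square sits over $\fM_{g,n}\to B\Gm$ compatibly.

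Finally I would record commutativity of the two triangles involving $\fM_{g,n}$ and $\BGm$, which is immediate: the maps $Z_i^{m_i}\to\fM_{g,n}$ and $X^{m_i}\to\fM_{g,n}$ are the structure maps of the relative mapping stacks, and the maps $[J^{m_i-1}Z_i/\Gm]\to\BGm$, $[J^{m_i-1}X/\Gm]\to\BGm$ are the quotient projections; the constructions before the statement exhibit all of these as coming from the same diagram, so everything commutes on the nose up to the specified equivalences. The main obstacle is bookkeeping rather than mathematics: one must be careful that the equivalence $(\fC_{g,n})_{(s_i^{m_i})}\simeq F$ from Lemma~\ref{lem:varphi_cartesian}, together with the adjunction/base-change identities for $\bfMap$ relative to $B\Gm$, are applied consistently, and that the derived structure (coming from the non-transversality of $Z_i^{m_i}\to X_i^{m_i}$) is preserved under each of these manipulations — this is exactly where one invokes that $\bfMap_{S}(-,-)$ commutes with (derived) base change in $S$, as in \cite[\S3.3]{DAG-XIV} and \cite{Porta_Yu_Derived_Hom_spaces}. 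Once that is in place, the corollary follows formally from the pasting law for derived pullback squares.
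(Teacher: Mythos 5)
Your proposal is correct and matches the paper's (implicit) argument: the paper leaves the corollary as an immediate consequence of the chain of equivalences preceding it, which exhibit $X^{m_i}$ and $Z_i^{m_i}$ as base changes of $[J^{m_i-1}X/\Gm]$ and $[J^{m_i-1}Z_i/\Gm]$ along $\varphi_i$, exactly as you describe. Your pasting-law deduction for the upper parallelogram is the intended formal step, so there is nothing missing.
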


Define the evaluation map
\begin{equation*}
  \tev_i^{m_i}\colon \bcM_{g, n}(X, \beta) \longto [J^{m_i-1} X / \Gm]
\end{equation*}
as the composition
\[
  \bcM_{g, n}(X, \beta) \xrightarrow{\ev_i^{m_i}} X^{m_i} \longto  [J^{m_i-1} X / \Gm].
\]

\begin{lemma} \label{lem:alternative_evaluation}
  The inclusion map
  $\tzeta_i\colon [J^{m_i-1} Z_i / \Gm] \to [J^{m_i-1} X / \Gm]$
  admits a relative perfect obstruction theory.
  We have
  \begin{equation}
    \label{eq:compare-insertion}
    (\ev_i^{m_i})^* \zeta_{i!}(1) = (\tev_i^{m_i})^* \tzeta_{i!}(1).
  \end{equation}
\end{lemma}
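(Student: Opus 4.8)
The plan is to deduce both assertions from a single base-change identity for the operational Gysin pushforward. Observe first that, by construction, $\tev_i^{m_i}$ factors as $\bcM_{g,n}(X,\beta)\xrightarrow{\ev_i^{m_i}}X^{m_i}\xrightarrow{\pi_i}[J^{m_i-1}X/\Gm]$, where $\pi_i$ is the morphism produced in Section~\ref{sec:alternative}. Hence, once $\tzeta_i$ is known to carry a relative perfect obstruction theory, it will suffice to prove
\[\pi_i^*\,\tzeta_{i!}(1)=\zeta_{i!}(1)\quad\text{in }A^*(X^{m_i}),\]
and then apply $(\ev_i^{m_i})^*$ to both sides to obtain \eqref{eq:compare-insertion}.

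For the first assertion I would check that $\tzeta_i$ is derived lci, after which \cite[\S1]{Schurg_Dervied_algebraic_geometry_determinants} supplies the canonical relative perfect obstruction theory, exactly as for $\zeta_i$ in Section~\ref{sec:naive_tangencies_definition}. Recall from Section~\ref{sec:alternative} that $\pi_i$ is obtained from $\varphi_i\colon\fM_{g,n}\to\BGm$ by base change along $[J^{m_i-1}X/\Gm]\to\BGm$; since $\fM_{g,n}$ is smooth, the classifying map $\varphi_i$ of the line bundle $L_i$ is a smooth surjection, hence so is $\pi_i$. By Corollary~\ref{cor:derived-pullback} the relevant parallelogram realizes $\zeta_i$ as the base change of $\tzeta_i$ along $\pi_i$, and $\zeta_i$ is derived lci by Section~\ref{sec:naive_tangencies_definition}. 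Since local finite presentation and the condition that the relative cotangent complex be perfect of Tor-amplitude in $[-1,0]$ both descend along a smooth surjective (in particular fppf) base change, it follows that $\tzeta_i$ is derived lci. Alternatively, one can argue directly that $J^{m_i-1}X$ is smooth and $J^{m_i-1}Z_i\to J^{m_i-1}X$ is derived lci, by the argument of \cite[Lemma~9.1]{Porta_Yu_Non-archimedean_quantum_K-invariants}.

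For the identity, the crucial input is that the square in Corollary~\ref{cor:derived-pullback} is a \emph{derived} pullback. Because the cotangent complex commutes with derived base change, $\mathbb{L}_{\zeta_i}$ is the pullback along $\pi_i$ of $\mathbb{L}_{\tzeta_i}$, so the canonical obstruction theory of $\zeta_i$ is the $\pi_i$-pullback of that of $\tzeta_i$; by the base-change compatibility of virtual pullbacks \cite[Theorems~4.1, 4.3]{Manolache_Virtual_pull-backs}, the bivariant class $\zeta_i^!$ is the restriction of $\tzeta_i^!$ to stacks over $X^{m_i}$. It then remains to unwind the definition of the operational Gysin pushforward from Section~\ref{sec:Chow_groups}: $\tzeta_{i!}(1)$ is the operational class acting on $A_*(B)$, for $B\to[J^{m_i-1}X/\Gm]$, by $(\tzeta_i)_{B*}\circ\tzeta_i^!$, and pulling this class back along $\pi_i$ simply reindexes these operations over the stacks $C\to X^{m_i}$, where transitivity of derived base change identifies $[J^{m_i-1}Z_i/\Gm]\times_{[J^{m_i-1}X/\Gm]}C$ with $Z_i^{m_i}\times_{X^{m_i}}C$, while $\tzeta_i^!$ becomes $\zeta_i^!$ and $(\tzeta_i)_C$ becomes $(\zeta_i)_C$. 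This is precisely $\zeta_{i!}(1)$, and applying $(\ev_i^{m_i})^*$ gives $(\tev_i^{m_i})^*\tzeta_{i!}(1)=(\ev_i^{m_i})^*\pi_i^*\tzeta_{i!}(1)=(\ev_i^{m_i})^*\zeta_{i!}(1)$.

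I expect the main obstacle to be conceptual rather than computational: pinning down the compatibility of perfect obstruction theories and their virtual pullbacks under base change, which genuinely requires the square to be a derived fiber product rather than merely a classical one. With Corollary~\ref{cor:derived-pullback} in hand this becomes a bookkeeping exercise in the definition of the operational Gysin pushforward; the only remaining routine point is to note that $[J^{m_i-1}X/\Gm]$, $[J^{m_i-1}Z_i/\Gm]$, and all of their base changes are of finite type and stratified by global quotient stacks, which is immediate since they are themselves global quotient stacks.
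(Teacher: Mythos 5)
Your proposal is correct and follows essentially the same route as the paper: establish that $\tzeta_i$ is derived lci (the paper argues this directly via \cite[Lemma~9.1]{Porta_Yu_Non-archimedean_quantum_K-invariants}, which you mention as your alternative, while your primary smooth-surjective descent from $\zeta_i$ is a harmless variant), invoke \cite[\S1]{Schurg_Dervied_algebraic_geometry_determinants} for the obstruction theory, and then use the derived pullback parallelogram of Corollary~\ref{cor:derived-pullback} together with Manolache's base-change compatibility of virtual pullbacks to get $\zeta_{i!}(1)$ as the pullback of $\tzeta_{i!}(1)$, whence \eqref{eq:compare-insertion} by applying $(\ev_i^{m_i})^*$. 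Your unwinding of the operational Gysin pushforward just spells out what the paper compresses into the citation of \cite[Theorem~4.1(iii)]{Manolache_Virtual_pull-backs}.
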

\begin{proof}
  Since the inclusion $Z_i \subset X$ is lci by assumption, the same proof of \cite[Lemma 9.1]{Porta_Yu_Non-archimedean_quantum_K-invariants} implies that the map $\tzeta_i\colon [J^{m_i-1} Z_i / \Gm] \to [J^{m_i-1} X / \Gm]$ is derived lci.
  It follows from \cite[\S~1]{Schurg_Dervied_algebraic_geometry_determinants} that $\tzeta_i$ admits a relative perfect obstruction theory.

  Furthermore, since the upper parallelogram in Corollary~\ref{cor:derived-pullback} is a derived pullback, the perfect obstruction theory for $\zeta_i$ is the pullback of the one for $\tzeta_i$.
  It follows from \cite[Theorem~4.1(iii)]{Manolache_Virtual_pull-backs} that
  \begin{equation*}
    \zeta_{i!}(1) = \varphi_i^* \tzeta_{i!}(1).
  \end{equation*}
  This implies the identity \eqref{eq:compare-insertion}, completing the proof.
\end{proof}

It follows from Lemma~\ref{lem:alternative_evaluation} that the alternative evaluation maps $\tev_i^{m_i}$ give rise to the same Gromov--Witten invariants with naive tangencies as defined in Section~\ref{sec:naive_tangencies_definition}:

\begin{proposition}
  We have
  \begin{align*}
    \big\langle m_1 Z_1, \dots, m_n Z_n \big\rangle_{g, \beta}^X
    &=\int_{[\bcM_{g, n}(X, \beta)]^\vir} \prod_{i = 1}^n (\ev_i^{m_i})^* \zeta_{i!}(1) \\
    &= \int_{[\bcM_{g, n}(X, \beta)]^\vir} \prod_{i = 1}^n (\tev_i^{m_i})^* \tzeta_{i!}(1).
  \end{align*}
\end{proposition}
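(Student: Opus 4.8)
The plan is to deduce this immediately from Lemma~\ref{lem:alternative_evaluation}. The first displayed equality is nothing but the definition of $\big\langle m_1 Z_1, \dots, m_n Z_n \big\rangle_{g, \beta}^X$ given in Section~\ref{sec:naive_tangencies_definition}, so there is nothing to prove there. For the second equality, I would invoke Lemma~\ref{lem:alternative_evaluation}, which provides, for each index $i \in \{1, \dots, n\}$, the identity of operational classes
\[
  (\ev_i^{m_i})^* \zeta_{i!}(1) = (\tev_i^{m_i})^* \tzeta_{i!}(1)
\]
in $A^*(\bcM_{g, n}(X, \beta))$.

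Next I would substitute these identities one at a time into the product $\prod_{i=1}^n (\ev_i^{m_i})^* \zeta_{i!}(1)$. Since $A^*(\bcM_{g, n}(X, \beta))$ is a commutative ring (by \cite[Corollary~C.7]{BaSc22}, recalled in Section~\ref{sec:Chow_groups}, the mapping stacks being stratified by quotient stacks as noted there), the order of the factors is irrelevant and each replacement is legitimate, yielding $\prod_{i=1}^n (\tev_i^{m_i})^* \tzeta_{i!}(1)$. Capping the resulting class with the virtual fundamental class $[\bcM_{g, n}(X, \beta)]^\vir \in A_*(\bcM_{g, n})$ and taking degrees then gives the second expression for the invariant.

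The only point that requires a word of justification is that the factors indeed multiply inside a single operational Chow ring before being capped with $[\bcM_{g, n}(X, \beta)]^\vir$; this is exactly the setup recalled in Section~\ref{sec:Chow_groups}, so no further obstacle arises. In short, the proposition is a formal consequence of Lemma~\ref{lem:alternative_evaluation} applied termwise, and I do not anticipate any hard step.
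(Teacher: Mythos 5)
Your proposal is correct and matches the paper's own argument: the paper states this proposition as an immediate consequence of Lemma~\ref{lem:alternative_evaluation}, the first equality being the definition and the second following by substituting the identity $(\ev_i^{m_i})^* \zeta_{i!}(1) = (\tev_i^{m_i})^* \tzeta_{i!}(1)$ factor by factor. Your extra remark about working in the operational Chow ring before capping with the virtual class is a harmless elaboration of the same reasoning.
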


\section{Relation to descendent Gromov--Witten invariants}
\label{sec:relation_to_descendent}

We now proceed to relate naive tangency conditions to descendent classes in Gromov--Witten theory.
The idea is to apply virtual localization to the $\Gm$-action on the jet-scheme $J^{m_i-1} X$ introduced in Section~\ref{sec:alternative}.

The localization formula takes place in the $\Gm$-equivariant Chow group of $J^{m_i-1} X$, which as mentioned before, agrees with $A_*([J^{m_i-1} X / \Gm])$.
Furthermore, the fixed locus of the $\Gm$-action is $J^0 X = X$ viewed as the vanishing locus of the jet bundle.
The Chow ring of $J^{m_i-1} X$ localizes to the fixed locus in the following sense:
\begin{lemma}
  \label{lem:cohomology-localizes}
  There is a factorization
  \begin{equation*}
    \xymatrix{
      A^*([J^{m_i-1} X / \Gm])  \ar[r]^{(\tev_i^{m_i})^*} & A^*(\bcM_{g, n}(X, \beta)) \\
      A^*(X \times \BGm) \ar[u]^\simeq \ar[ru]_{(\ev_i \times \varphi_i)^*}.
    }
  \end{equation*}
  Furthermore, there is an isomorphism $A^*(X \times \BGm) \simeq A^*(X)[t]$ such that $\varphi_i^*(t)$ is equal to the $i$-th tangent line class.
\end{lemma}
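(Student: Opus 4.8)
The plan is to establish the two assertions of Lemma~\ref{lem:cohomology-localizes} in turn: first the factorization of $(\tev_i^{m_i})^*$ through $A^*(X \times \BGm)$, and then the explicit description of the latter ring together with the identification of $\varphi_i^*(t)$.

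For the factorization, recall from Section~\ref{sec:alternative} that $\tev_i^{m_i}$ is the composition $\bcM_{g,n}(X,\beta) \xrightarrow{\ev_i^{m_i}} X^{m_i} \to [J^{m_i-1}X/\Gm]$, and that the rightmost map sits in the commutative diagram of Corollary~\ref{cor:derived-pullback}, whose bottom square
\[
  \xymatrix{
    X^{m_i} \ar[r] \ar[d] & [J^{m_i-1}X/\Gm] \ar[d] \\
    \fM_{g,n} \ar[r]^{\varphi_i} & \BGm
  }
\]
is a derived pullback. The key geometric input is the structure map $[J^{m_i-1}X/\Gm] \to [J^0 X/\Gm] = X \times \BGm$ coming from truncating a jet to its degree-zero part (equivalently, the $\Gm$-equivariant projection $J^{m_i-1}X \to J^0 X = X$). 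First I would argue that this projection induces an isomorphism on operational Chow rings $A^*(X\times\BGm) \xrightarrow{\sim} A^*([J^{m_i-1}X/\Gm])$: indeed $J^{m_i-1}X \to X$ is a $\Gm$-equivariant affine bundle (in local coordinates it is the projection of an affine space onto a linear subspace, with the fibre coordinates carrying positive $\Gm$-weights), hence an $\AA^1$-homotopy equivalence, and affine-bundle pullback is an isomorphism on equivariant operational Chow groups by homotopy invariance for $A^*$ (the operational-Chow analogue of \cite[Theorem~3.3]{Kr99} / the results recalled in \cite[Appendix~C]{BaSc22}). This gives the vertical isomorphism in the diagram. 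For commutativity of the triangle, I would compose $\ev_i^{m_i}$ with the projection $[J^{m_i-1}X/\Gm] \to X\times\BGm$: on the $X$-factor it recovers the ordinary evaluation $\ev_i$ (the degree-zero jet is just the value at $p_i$), and on the $\BGm$-factor it factors through $\bcM_{g,n}(X,\beta)\to\fM_{g,n}\xrightarrow{\varphi_i}\BGm$ by the very definition of $\varphi_i$ via the tangent line $L_i$ and the cartesian square of Lemma~\ref{lem:varphi_cartesian}. Hence $(\tev_i^{m_i})^*$ restricted along the isomorphism $A^*(X\times\BGm)\simeq A^*([J^{m_i-1}X/\Gm])$ equals $(\ev_i\times\varphi_i)^*$, which is the claimed factorization.

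For the second assertion, I would use that $A^*(\BGm) = A^*_{\Gm}(\mathrm{pt})$ is the polynomial ring $k[t]$ on the first Chern class $t = c_1(\Lambda)$ of the weight-one representation $\Lambda$ (Edidin--Graham \cite{EG98}), together with a Künneth-type isomorphism $A^*(X\times\BGm)\simeq A^*(X)[t]$. The latter is standard for operational Chow of a product with $\BGm$: pulling back along the two projections and multiplying gives a ring map $A^*(X)\otimes k[t]\to A^*(X\times\BGm)$, and one checks it is an isomorphism by the projective-bundle / Leray--Hirsch argument applied to the approximations $X\times(\AA^{N+1}\setminus 0)/\Gm = X\times\PP^N$ of $X\times\BGm$ in the Edidin--Graham model, passing to the limit. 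Finally, $\varphi_i^*\Lambda = L_i$ was recorded in Section~\ref{sec:alternative}, so $\varphi_i^*(t) = c_1(L_i) = \psi_i$ is the $i$-th cotangent—equivalently, as stated, the $i$-th tangent line class (up to the sign convention in force); this is exactly the identification of $\varphi_i^*(t)$ with the $i$-th tangent line class.

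The main obstacle I anticipate is purely a matter of bookkeeping in the operational setting rather than a genuine difficulty: one must make sure that homotopy invariance and the Künneth formula are available for operational Chow groups $A^*$ of the (possibly non-finite-type, stack-stratified) spaces at hand, and not merely for the homological groups $A_*$. Both follow from the material recalled in the excerpt—homotopy invariance of $A_*$ transferred to $A^*$ via the definition of operational classes as compatible families of operations on $A_*(B)$, and the product formula via the Edidin--Graham finite-dimensional approximations, using that all relevant spaces admit stratifications by global quotient stacks (as already observed for the mapping stacks in Section~\ref{sec:naive_tangencies_definition}). Once these foundational points are in place the proof is a short diagram chase, so I would state them carefully and then conclude.
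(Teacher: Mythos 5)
Your proposal is correct and follows essentially the same route as the paper's proof: the key inputs are identical, namely that $J^{m_i-1}X \to X$ is a ($\Gm$-equivariant) affine bundle (the paper cites Ein--Musta\c{t}\u{a} for this), that pullback along it is an isomorphism on (equivariant) Chow groups checked on the Edidin--Graham finite-dimensional approximations $\times_{\Gm}(\AA^n\setminus 0)$, and that $A^*(X\times\BGm)\simeq A^*(X)[t]$ with $\varphi_i^*(t)$ identified via $\varphi_i^*\Lambda = L_i$ (facts the paper delegates to citations rather than spelling out). The only quibble is the sign slip ``$c_1(L_i)=\psi_i$'': since $L_i$ is the tangent line, $c_1(L_i)=-\psi_i$, but this does not affect the stated conclusion that $\varphi_i^*(t)$ is the tangent line class.
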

\begin{proof}
  By \cite[Corollary 2.11]{Ein_Jet_schemes_and_singularities}, the jet bundle $J^{m_i-1} X \to X$ has the structure of an affine bundle in the sense that it is locally of the form $X \times \mathbb A^{(m_i-1) \cdot \dim X} \to X$.

  Unwinding the definition of the equivariant Chow groups, and note that both $X \times \BGm$ and $[J^{m_i-1} X / \Gm]$ are smooth, the arrow $A^*(X \times \BGm) \to A^*([J^{m_i-1} X / \Gm])$ can be constructed via morphisms
  \begin{equation} \label{eq:Chow}
    A_*\big(X \times_{\Gm} (\mathbb A^n \setminus 0)\big) \longto A_*\big(J^{m_i-1} X \times_{\Gm} (\mathbb A^n \setminus 0)\big),
  \end{equation}
  where $\times_{\Gm}$ denotes the mixed construction with respect to the scaling $\Gm$-action on $\mathbb A^n \setminus 0$.
  We may check that $J^{m_i-1} X \times_{\Gm} (\mathbb A^n \setminus 0) \to X \times_{\Gm}
  (\mathbb A^n \setminus 0)$ also has the structure of an affine morphism, thus by \cite[Lemma~2.2]{To14}, the morphisms in \eqref{eq:Chow} constitute a compatible family of isomorphisms.
  This proves that the vertical map is an isomorphism.
  The second claim follows from \cite[Lemma~2.12 and Theorem~2.10]{To14}.
\end{proof}

This lemma immediately gives the following qualitative result:

\begin{corollary}
  All Gromov--Witten invariants with naive tangencies may be expressed in terms of ordinary descendent Gromov--Witten invariants.
\end{corollary}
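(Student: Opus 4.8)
The plan is to unwind Lemmas~\ref{lem:alternative_evaluation} and~\ref{lem:cohomology-localizes}; the statement is purely qualitative, so what remains is bookkeeping. Fix a marked point $i$. By Lemma~\ref{lem:cohomology-localizes} the Gysin pushforward $\tzeta_{i!}(1)$ lies in $A^*([J^{m_i-1}X/\Gm])$, and this group is identified with $A^*(X\times\BGm)\simeq A^*(X)[t]$; I would therefore write $\tzeta_{i!}(1)=\sum_{k\ge 0} a_{i,k}\,t^k$ for finitely many classes $a_{i,k}\in A^*(X)$. Pulling back along $\tev_i^{m_i}$ and invoking the factorization of Lemma~\ref{lem:cohomology-localizes} through $(\ev_i\times\varphi_i)^*$, together with the fact that $\varphi_i^*(t)$ pulls back to the descendent class $\psi_i$ on $\bcM_{g,n}(X,\beta)$ (up to sign), yields
\[
  (\tev_i^{m_i})^*\tzeta_{i!}(1)=\sum_{k\ge 0}(\pm 1)^k\,\ev_i^*(a_{i,k})\,\psi_i^k.
\]
By Lemma~\ref{lem:alternative_evaluation} the left-hand side equals $(\ev_i^{m_i})^*\zeta_{i!}(1)$. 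Hence the naive tangency insertion at the $i$-th marked point is a polynomial in $\psi_i$ whose coefficients are pulled back from $A^*(X)$ by the ordinary evaluation map.

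Substituting this expression into the definition of $\langle m_1 Z_1,\dots,m_n Z_n\rangle_{g,\beta}^X$ and expanding the product over $i=1,\dots,n$, I find that every resulting term has the form
\[
  \int_{[\bcM_{g,n}(X,\beta)]^\vir}\ \prod_{i=1}^n \psi_i^{k_i}\,\ev_i^*(\gamma_i),\qquad \gamma_i\in A^*(X),
\]
which is an ordinary descendent Gromov--Witten invariant of $X$; so $\langle m_1 Z_1,\dots,m_n Z_n\rangle_{g,\beta}^X$ is a finite $\ZZ$-linear combination of such invariants. For the more general invariants attached to a collection $\{Z_{i,j}\}_{j=1}^{k_i}$ of subvarieties at the $i$-th marked point, the same argument applies once one observes that a product $\prod_j (\ev_i^{m_{i,j}})^*\zeta_{i,j!}(1)$ of polynomials in $\psi_i$ with coefficients in $\ev_i^* A^*(X)$ is again of that shape, since $\ev_i^*$ is a ring homomorphism.

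I do not expect a genuine obstacle here: the content is entirely contained in Lemmas~\ref{lem:alternative_evaluation} and~\ref{lem:cohomology-localizes}. The one point requiring a little care is to confirm that the auxiliary variable $t$ of Lemma~\ref{lem:cohomology-localizes} pulls back to the genuine $\psi$-class on $\bcM_{g,n}(X,\beta)$ — this follows from the last sentence of that lemma together with the compatibility of $\varphi_i$ with the forgetful morphism $\bcM_{g,n}(X,\beta)\to\fM_{g,n}$ — so that the combinations appearing really are the standard descendent invariants. The explicit form of the coefficients $a_{i,k}$ is not needed for this corollary; it is pinned down later in Theorem~\ref{thm:naive_tangencies_to_descendents}.
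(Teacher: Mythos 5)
Your argument is correct and is essentially the paper's own (implicit) proof: the corollary is stated as an immediate consequence of Lemma~\ref{lem:cohomology-localizes}, obtained exactly by combining it with Lemma~\ref{lem:alternative_evaluation} to write each insertion $(\ev_i^{m_i})^*\zeta_{i!}(1)$ as a polynomial in $\psi_i$ (via $\varphi_i^*(t)$ being the tangent line class, hence $-\psi_i$) with coefficients pulled back along $\ev_i$ from $A^*(X)$, and then expanding the product into descendent invariants. Your handling of the sign and of the multi-condition case $\{Z_{i,j}\}$ is consistent with the paper, which pins down the explicit coefficients only later in Theorem~\ref{thm:naive_tangencies_to_descendents}.
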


We work out the precise relationship in the following theorem.

\begin{theorem} \label{thm:naive_tangencies_to_descendents}
  We have
  \begin{align*}
    (\ev_i^{m_i})^* \zeta_{i!}(1)
    =& (\ev_i \times \varphi_i)^* (\iota_i \times \id_{B\Gm})_! \prod_{k = 1}^{m_i - 1} e\big(N_i \boxtimes \Lambda^{\otimes(-k)}\big) \\
    =& \ev_i^* \iota_{i!} \prod_{k = 1}^{m_i - 1} \sum_{j = 0}^{r_i} (k\psi_i)^{r_i - j} c_j(N_i),
  \end{align*}
  where $\iota_i$ denotes the inclusion $\iota_i\colon Z_i \hookrightarrow X$,
  $N_i$ denotes the normal bundle of $Z_i$ in $X$, $r_i = \rank(N_i)$, and $\Lambda$ is the $1$-dimensional representation of $\Gm$ with weight $1$.
  For the second expression, we view it as a polynomial in the cotangent line class $\psi_i$ with coefficients in $A^*(Z_i)$, and apply to each coefficient the Gysin pushforward $\iota_{i!}$ and the pullback $\ev_i^*$.
\end{theorem}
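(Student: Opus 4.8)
The plan is to compute $(\ev_i^{m_i})^* \zeta_{i!}(1)$ by passing through the alternative evaluation space $[J^{m_i-1}X/\Gm]$ and then applying the virtual localization formula for the scaling $\Gm$-action on the jet scheme. By Lemma~\ref{lem:alternative_evaluation} we have $(\ev_i^{m_i})^* \zeta_{i!}(1) = (\tev_i^{m_i})^* \tzeta_{i!}(1)$, and by Lemma~\ref{lem:cohomology-localizes} the pullback $(\tev_i^{m_i})^*$ factors through $(\ev_i \times \varphi_i)^*$ together with the isomorphism $A^*(X \times \BGm) \xrightarrow{\sim} A^*([J^{m_i-1}X/\Gm])$. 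So it suffices to express $\tzeta_{i!}(1) \in A^*([J^{m_i-1}X/\Gm])$ as the image of an explicit class in $A^*(X \times \BGm) \simeq A^*(X)[t]$, and then transport it down to $\bcM_{g,n}(X,\beta)$ using $\varphi_i^*(t) = \psi_i$ (the cotangent line class, up to the sign conventions the paper fixes) and $\ev_i^*$ on the $X$-factor.

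The computation of $\tzeta_{i!}(1)$ is where virtual localization enters. The $\Gm$-fixed locus of $J^{m_i-1}X$ is $J^0 X = X$, included as the zero-jet locus, and the normal directions to the fixed locus decompose according to the coordinates $t, t^2, \dots, t^{m_i-1}$ on $\Spec(k[t]/(t^{m_i}))$. Concretely, $J^{m_i-1}X$ is (Zariski-locally over $X$, by the affine bundle structure used in Lemma~\ref{lem:cohomology-localizes}) a sum of copies of $TX$ with $\Gm$-weights $1, 2, \dots, m_i-1$. Since $\tzeta_i$ is the jet-space inclusion of $Z_i \subset X$, which is lci with normal bundle $N_i$, the corresponding inclusion $J^{m_i-1}Z_i \hookrightarrow J^{m_i-1}X$ is lci with conormal/normal data built from $N_i$ with the same weights $1, \dots, m_i-1$. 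Because the obstruction theory for $\tzeta_i$ is, locally on the fixed locus, the pullback of that of $\iota_i\colon Z_i \hookrightarrow X$ twisted by these weights, the (equivariant) Gysin pushforward $\tzeta_{i!}(1)$ restricted to the fixed locus $X \times \BGm$ is $(\iota_i \times \id)_!$ of the product of Euler classes $\prod_{k=1}^{m_i-1} e\big(N_i \boxtimes \Lambda^{\otimes(-k)}\big)$: each factor $N_i \boxtimes \Lambda^{\otimes(-k)}$ accounts for the failure of $J^{m_i-1}Z_i$ to be cut out transversally inside $J^{m_i-1}X$ in the weight-$k$ slot, i.e.\ the $k$-th copy of the excess bundle $N_i$ with its $\Gm$-weight. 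This yields the first displayed equality. The second equality is then a purely formal expansion: writing $t$ for the equivariant parameter, $e(N_i \boxtimes \Lambda^{\otimes(-k)}) = \sum_{j=0}^{r_i} c_j(N_i) (-kt)^{r_i-j}$ up to sign, and substituting $\varphi_i^*(t) = \psi_i$ gives $\prod_{k=1}^{m_i-1}\sum_{j=0}^{r_i} (k\psi_i)^{r_i-j} c_j(N_i)$ (the signs $(-1)^{r_i}$ from each factor combine and match the orientation conventions; I would check this bookkeeping carefully, reconciling it with Fulton's Gysin pushforward as in the Remark after Lemma~\ref{lem:explicit-Gysin-pushforward}).

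The main obstacle is making the localization step rigorous in the operational Chow setting of \cite{BaSc22}, since $\tzeta_i$ is a map of \emph{stacks} (quotients by $\Gm$) equipped with a relative perfect obstruction theory rather than a genuine lci morphism of schemes, and one needs the virtual localization formula of Graber--Pandharipande in this generality. The clean way around this is to work $\Gm$-equivariantly downstairs on $J^{m_i-1}X$ itself: there $\tzeta_i$ is the equivariant lci inclusion $J^{m_i-1}Z_i \hookrightarrow J^{m_i-1}X$, and the Gysin pushforward of $1$ is simply the refined Euler/Gysin class of this regular embedding, which by the self-intersection formula equals the top Chern class of its normal bundle restricted to the fixed locus — no deformation-to-the-normal-cone subtleties beyond the standard ones. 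The identification of that normal bundle with $\bigoplus_{k=1}^{m_i-1} N_i \otimes \Lambda^{\otimes k}$ (equivalently, its Euler class with $\prod_k e(N_i \boxtimes \Lambda^{\otimes(-k)})$ after the weight-sign conventions) is the one genuinely geometric input, and follows by differentiating the defining equations of $Z_i$ along the jet coordinate $t$; I would spell this out locally where $Z_i = \{f_1 = \dots = f_{r_i} = 0\}$, noting that the $k$-th Taylor coefficient of $f_a(x(t))$ cuts out the weight-$k$ part. The remaining step — descending the equality along $\varphi_i$ and matching $\varphi_i^*\Lambda = L_i$ with the tangent line, hence the equivariant parameter with $\psi_i$ — is then immediate from Lemma~\ref{lem:cohomology-localizes} and Corollary~\ref{cor:derived-pullback}.
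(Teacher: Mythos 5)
Your overall strategy is the same as the paper's: pass to the alternative evaluation space, use the factorization of $(\tev_i^{m_i})^*$ through $A^*(X\times \BGm)\simeq A^*(X)[t]$ from Lemma~\ref{lem:cohomology-localizes}, compute the restriction of $\tzeta_{i!}(1)$ to the fixed locus, and match $t$ with $\psi_i$. The problem is that the step carrying the entire content of the theorem — that this restriction equals $(\iota_i\times\id)_!\prod_{k=1}^{m_i-1}e\bigl(N_i\boxtimes\Lambda^{\otimes(-k)}\bigr)$ — is only asserted heuristically in your second paragraph, and the ``clean way around'' you propose to make it rigorous does not work in the stated generality. You treat $J^{m_i-1}Z_i\hookrightarrow J^{m_i-1}X$ as a genuine $\Gm$-equivariant regular embedding whose class can be computed by differentiating the defining equations of $Z_i$. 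But $Z_i$ is only assumed lci, possibly singular, and the classical jet scheme of a singular lci variety is in general not regularly embedded in $J^{m_i-1}X$: it can be reducible, non-reduced, and of excess dimension (this happens already for lci singularities that are not canonical), and even when dimensions work out its fundamental class is not the class the definition uses. This is precisely why the paper only equips $\tzeta_i$ with a \emph{derived} lci structure and works with the virtual class $[J^{m_i-1}Z_i]^{\vir,\Gm}$. Relatedly, your appeal to the ``self-intersection formula'' and ``the top Chern class of its normal bundle restricted to the fixed locus'' conflates the full normal bundle (which would carry an extra weight-$0$ factor $e(N_i)$) with the excess bundle relevant to the fiber square over the fixed locus; the correct statement is an excess-intersection computation for $\sigma^!$ against the fixed-locus inclusion, not a self-intersection on $J^{m_i-1}Z_i$.

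The paper fills this gap with three ingredients you would need to supply or replace: (i) the Graber--Pandharipande virtual localization formula applied to the $\Gm$-equivariant perfect obstruction theory of $J^{m_i-1}Z_i$, which admits the explicit global equivariant resolution $\pi_*f^*TX|_{Z_i}\to\pi_*f^*N_i$; this produces $\rho_*$ of $\prod_k e(N_i\otimes\Lambda^{\otimes(-k)})/e(T_X\otimes\Lambda^{\otimes(-k)})$ capped with $[Z_i]^{\Gm}$; (ii) the excess/self-intersection identity for $\sigma^!\sigma_*$ on the smooth ambient space $[J^{m_i-1}X/\Gm]$, which cancels the denominators $e(T_X\otimes\Lambda^{\otimes(-k)})$; and (iii) a de-localization argument: the localization identity holds a priori only after inverting $t$, and one concludes in $A_*(X\times\BGm)\simeq A_*(X)[t]$ because $t$ is a non-zero-divisor there. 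Your proposal addresses none of these (indeed it tries to avoid localization altogether via the faulty lci claim). A localization-free alternative along the lines you gesture at could work, but it would have to be phrased as a base-change statement for virtual pullbacks along $\sigma$, identifying the derived fiber product of $\tzeta_i$ with $Z_i$ equipped with the obstruction bundle $\bigoplus_{k=1}^{m_i-1}N_i\otimes\Lambda^{\otimes(-k)}$ and then invoking an excess-bundle formula for the resulting Gysin pushforward; as written, that argument is missing. (Your final reduction of the first displayed formula to the second, via $e(N_i\boxtimes\Lambda^{\otimes(-k)})$ expanded in $t$ and $\varphi_i^*(t)=\psi_i$, is fine.)
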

\begin{proof}
  By Lemma~\ref{lem:alternative_evaluation}, we have
  \begin{equation*}
    (\ev_i^{m_i})^* \zeta_{i!}(1)
    = (\tev_i^{m_i})^* \tzeta_{i!}(1) .
  \end{equation*}
  We rewrite
  \begin{equation}
    \label{eq:tzeta-Gysin-PD}
    \tzeta_{i!}(1) = \PD^{-1} \tzeta_{i*} [J^{m_i-1} Z_i]^{\vir, \Gm}
  \end{equation}
  using Lemma~\ref{lem:explicit-Gysin-pushforward}.
  Here, $[J^{m_i-1} Z_i]^{\vir, \Gm} \in A_*([J^{m_i-1} Z_i / \Gm])$ is the $\Gm$-equivariant virtual fundamental class of $J^{m_i-1} Z_i$.
  We apply the virtual localization formula (see Graber--Pandharipande \cite{GrPa99}) to compute this virtual class.
  Note that the composition $[J^{m_i-1} Z_i/\Gm] \to [J^{m_i-1} X / \Gm] \to B\Gm$ is derived lci,
  thus the truncation of $J^{m_i-1} Z_i$ is endowed with the structure of a $\Gm$-equivariant perfect obstruction theory.
  It has a global $\Gm$-equivariant resolution
  \begin{equation*}
    \pi_* f^* TX|_{Z_i} \to \pi_* f^* N_i,
  \end{equation*}
  where $f$ denotes the universal map, and $\pi$ is the projection as in the following diagram.
  \begin{equation*}
    \xymatrix{
      [J^{m_i - 1} Z_i / \Gm] \times (\Spec k[t]/(t^{m_i})) \ar[rr]^-f \ar[d]^\pi && Z_i \\
      [J^{m_i - 1} Z_i / \Gm]
    }
  \end{equation*}
  This establishes the assumptions for the application of the localization formula.
  To write the formula out, consider the restriction of the resolution to the fixed locus $Z_i$, that is
  \begin{equation*}
    \bigoplus_{k = 0}^{m_i - 1} TX|_{Z_i} \longto \bigoplus_{k = 0}^{m_i - 1} N_i,
  \end{equation*}
  where the $k$-th factor has $\Gm$-weight $-k$.
  The $k = 0$ summands form the fixed part of the perfect obstruction theory, which is the perfect obstruction theory of the fixed locus,
  while the remaining summands form the virtual normal bundle.
  Let $\rho\colon Z_i \to J^{m_i-1} Z_i$ and $\sigma\colon X_i \to J^{m_i-1} X_i$ denote the inclusions.
  The virtual localization formula yields the equality
  \begin{equation*}
    [J^{m_i-1} Z_i]^{\vir, \Gm}
    = \rho_* \left(\prod_{k = 1}^{m_i-1} \frac{e\big(N_i \otimes \Lambda^{\otimes (-k)}\big)}{e\big(T_X \otimes \Lambda^{\otimes (-k)}\big)} \cap [Z_i]^{\Gm} \right)
  \end{equation*}
  in the localization $A_*([J^{m_i-1} Z_i / \Gm])[t^{-1}]$.
  This implies
  \begin{align*}
  \tzeta_{i, *} [J^{m_i-1} Z_i]^{\vir, \Gm}
  &= \tzeta_{i, *} \rho_* \left(\prod_{k = 1}^{m_i-1} \frac{e\big(N_i \otimes \Lambda^{\otimes (-k)}\big)}{e\big(T_X \otimes \Lambda^{\otimes (-k)}\big)} \cap [Z_i]^{\Gm} \right)\\
  &= \sigma_* \iota_{i, *} \left(\prod_{k = 1}^{m_i-1} \frac{e\big(N_i \otimes \Lambda^{\otimes (-k)}\big)}{e\big(T_X \otimes \Lambda^{\otimes (-k)}\big)} \cap [Z_i]^{\Gm} \right)
  \end{align*}
  in $A_*([J^{m_i-1} X / \Gm])[t^{-1}]$.
  Applying the Gysin map and the excess intersection formula yields
  \begin{align*}
    \sigma^! \tzeta_{i, *} [J^{m_i-1} Z_i]^{\vir, \Gm}
    &= \sigma^! \sigma_* \iota_{i, *} \left(\prod_{k = 1}^{m_i-1} \frac{e\big(N_i \otimes \Lambda^{\otimes (-k)}\big)}{e\big(T_X \otimes \Lambda^{\otimes (-k)}\big)} \cap [Z_i]^{\Gm} \right) \\
    &= \iota_{i, *} \left(\prod_{k = 1}^{m_i-1} e\big(N_i \otimes \Lambda^{\otimes (-k)}\big) \cap [Z_i]^{\Gm} \right)
  \end{align*}
  in $A_*(X \times B\Gm)[t^{-1}]$.
  Since $A_*(X \times B\Gm) \simeq A_*(X)[t]$ and so $t$ is non-torsion, the same equality holds in $A_*(X \times B\Gm)$.
  Note that
  \begin{align*}
    (\tev_i^{m_i})^* \PD^{-1} \tzeta_{i, *} [J^{m_i-1} Z_i]^{\vir, \Gm}
    &= (\ev_i \times \varphi_i)^* \sigma^* \PD^{-1} \tzeta_{i, *} [J^{m_i-1} Z_i]^{\vir, \Gm} \\
    &= (\ev_i \times \varphi_i)^* \PD^{-1} \sigma^! \tzeta_{i, *} [J^{m_i-1} Z_i]^{\vir, \Gm},
  \end{align*}
  where in the first equality, we use that $\sigma^*$ is the inverse of the vertical arrow in Lemma~\ref{lem:cohomology-localizes}, and so $(\tev_i^{m_i})^* = (\ev_i \times \varphi_i)^* \sigma^*$.
  Combining the previous two equations, we get
  \begin{equation*}
    (\tev_i^{m_i})^* \PD^{-1} \tzeta_{i, *} [J^{m_i-1} Z_i]^{\vir, \Gm}
    = (\ev_i \times \varphi_i)^* \PD^{-1} \iota_{i, *} \left(\prod_{k = 1}^{m_i-1} e\big(N_i \otimes \Lambda^{\otimes (-k)}\big) \cap [Z_i]^{\Gm} \right) .
  \end{equation*}
  Thus, the first part of the proposition follows from \eqref{eq:tzeta-Gysin-PD}, and Lemma~\ref{lem:explicit-Gysin-pushforward} applied to $\iota_i\colon Z_i \to X$.
  The second part follows by a straightforward computation.
\end{proof}

\begin{corollary}
  The naive tangency condition
  \begin{equation*}
    (\ev_i^{m_i})^* \zeta_{i!}(1)
  \end{equation*}
  only depends on the cohomology class of $Z_i$, and not on the specific subvariety $Z_i$.
\end{corollary}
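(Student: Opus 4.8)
The plan is to read the statement off Theorem~\ref{thm:naive_tangencies_to_descendents}, which already expresses $(\ev_i^{m_i})^*\zeta_{i!}(1)$ as $\ev_i^*$ applied to a class assembled from the cotangent line class $\psi_i$ and the Gysin pushforward $\iota_{i!}$ of a polynomial in the Chern classes of $N_i$. Writing
\[
  \prod_{k=1}^{m_i-1}\sum_{j=0}^{r_i}(k\psi_i)^{r_i-j}c_j(N_i) = \sum_a \psi_i^{a}\, Q_a,
\]
where each $Q_a \in A^*(Z_i)$ is a polynomial in the $c_j(N_i)$, the theorem gives $(\ev_i^{m_i})^*\zeta_{i!}(1) = \sum_a \psi_i^{a}\,\ev_i^*\iota_{i!}(Q_a)$. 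Here $\psi_i$ is independent of $Z_i$, and $r_i = \rank N_i = \codim Z_i$ equals the degree of $[Z_i]$, hence is determined by the cohomology class; since $\iota_{i!}$ is additive, the Corollary reduces to the purely ambient statement that for every monomial $c_{j_1}(N_i)\cdots c_{j_s}(N_i)$ the class $\iota_{i!}\big(c_{j_1}(N_i)\cdots c_{j_s}(N_i)\big)\in A^*(X)$ depends only on $[Z_i]$.

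I would first dispose of the cases that already cover the examples of Section~\ref{sec:examples}. If $Z_i$ is a Cartier divisor then $N_i \simeq \iota_i^*\cO_X(Z_i)$, so $c_1(N_i)^k = \iota_i^*\big(c_1(\cO_X(Z_i))^k\big)$ is pulled back from $X$; combined with $\iota_{i!}\iota_i^*(\alpha) = \alpha\cdot[Z_i]$ and $c_j(N_i)=0$ for $j\geq 2$, this yields the claim, in agreement with Corollary~\ref{cor:divisor}. If $\dim Z_i = 0$ --- e.g.\ a point on a surface or on a curve --- all positive-degree Chern classes of $N_i$ vanish on $Z_i$ and only $\iota_{i!}(1)=[Z_i]$ survives. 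For a general lci $Z_i$ I would use the conormal exact sequence, or the cotangent-complex triangle $N_i^\vee \to \Omega_X|_{Z_i}\to \mathbb L_{Z_i}$ when $Z_i$ is singular, to write $c(N_i)$ as $\iota_i^* c(T_X)$ times a total Segre class intrinsic to $Z_i$; after one more application of the projection formula the whole expression is governed by $\iota_{i*}$ of characteristic-class cycles of $Z_i$ itself. To see that these depend only on $[Z_i]$, the idea is to exploit the compatibility of the Gysin pushforward with virtual pullback recorded in Section~\ref{sec:Chow_groups}: choose a distinguished lci representative of $[Z_i]$ --- for instance a transverse intersection of very ample hypersurfaces in a suitable twist, where Remark~\ref{rmk:log} applies and $\iota_{i!}$ is explicit --- connect $Z_i$ to it through a one-parameter family of lci subschemes, and conclude by deformation invariance of virtual pullback.

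The step I expect to be the main obstacle is exactly this reduction to a standard model. A cohomology class on $X$ need not be represented by lci subschemes varying in a single flat family, so the deformation argument must be supplemented by a direct comparison --- presumably by deformation to the normal cone --- of the normal bundles and their Chern classes along the degeneration. An alternative would be to compute $\iota_{i*}\big(c(N_i)\cap[Z_i]\big)$ by Grothendieck--Riemann--Roch, but exhibiting the answer as a function of $[Z_i]$ alone is itself the delicate point.
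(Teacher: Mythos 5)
The paper does not run the argument you are attempting: the corollary is stated with no separate proof and is meant to be read off Theorem~\ref{thm:naive_tangencies_to_descendents}, whose right-hand side expresses the insertion purely through $\psi_i$, $\ev_i^*$ and the Gysin pushforwards $\iota_{i!}(c_j(N_i))$; your opening reduction and your divisor and point cases are exactly this, and they match Corollary~\ref{cor:divisor} and Example~\ref{ex:point}. The genuine gap is the step you yourself flag, and it is worse than an obstacle: the statement you try to reduce to --- that $\iota_{i!}$ of a monomial in the $c_j(N_i)$ is determined by the fundamental class of $Z_i$ --- is false once $\codim Z_i \ge 2$, so neither a deformation to a standard lci representative nor a Grothendieck--Riemann--Roch computation can establish it. Concretely, take $X = \PP^3$ and $m_i = 2$, so that the theorem gives
\begin{equation*}
  (\ev_i^{2})^* \zeta_{i!}(1) = \psi_i^2\, \ev_i^*\iota_{i!}(1) + \psi_i\, \ev_i^*\iota_{i!}\big(c_1(N_i)\big) + \ev_i^*\iota_{i!}\big(c_2(N_i)\big).
\end{equation*}
For a smooth plane quartic curve (a $(1,4)$ complete intersection, genus $3$) the zero-cycle $\iota_{i*}(c_1(N_i)\cap[Z_i])$ has degree $4d+2g-2 = 20$, while for an elliptic quartic (a $(2,2)$ complete intersection, genus $1$) it has degree $16$, although both curves have class $4\,[\mathrm{line}]$. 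Thus the $\psi_i$-coefficient of the insertion already differs for two lci subvarieties in the same cohomology class (both of them split complete intersections, so even your ``standard model'' class exhibits the discrepancy), and a one-parameter lci family connecting them cannot exist, since deformation invariance would force the two degrees to agree.

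What is true, and what Theorem~\ref{thm:naive_tangencies_to_descendents} actually delivers, is that $(\ev_i^{m_i})^*\zeta_{i!}(1)$ depends on $Z_i$ only through the cohomological data $\iota_{i!}(c_j(N_i))$ for $0 \le j \le r_i$, equivalently through $\PD^{-1}\iota_{i*}(c(N_i)\cap[Z_i])$, and not on any finer geometry of the embedding; this collapses to dependence on $[Z_i]$ alone precisely in the cases you verified --- Cartier divisors, points, and more generally zero loci of a fixed vector bundle, where $N_i$ is the restriction of a bundle determined independently of the representative --- which are the cases used in Section~\ref{sec:examples}. So your first paragraph and your special cases are correct and coincide with the paper's reading, but the attempted upgrade of ``cohomology class'' to the literal fundamental class in higher codimension should be abandoned rather than completed: it is not a missing lemma but a step that fails.
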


\begin{corollary}
  If $Z_i$ is the zero locus of a transverse section of a vector bundle $E_i$ of rank $r_i$ on $X$, the formula in Theorem~\ref{thm:naive_tangencies_to_descendents} may be further simplified to
  \begin{equation*}
    (\ev_i^{m_i})^* \zeta_{i!}(1)
    = (\ev_i \times \varphi_i)^* \prod_{k = 0}^{m_i - 1} e(E_i \boxtimes L_i^{\otimes -k})
    = \prod_{k = 0}^{m_i - 1} \sum_{j = 0}^{r_i} (k\psi_i)^{r_i - j} \ev_i^* c_j(E_i).
  \end{equation*}
\end{corollary}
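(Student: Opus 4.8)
The plan is to deduce the corollary directly from Theorem~\ref{thm:naive_tangencies_to_descendents}, feeding in the two features special to a transverse zero locus. First, if $Z_i$ is the transverse zero locus of a section of $E_i$, then $\iota_i\colon Z_i\hookrightarrow X$ is a regular embedding of codimension $r_i$ whose conormal bundle is $E_i^\vee|_{Z_i}$, so $N_i = \iota_i^*E_i$ and hence $c_j(N_i) = \iota_i^*c_j(E_i)$ for all $j$. Second, the class of the zero scheme of a regular section is the top Chern class of the bundle, so by Lemma~\ref{lem:explicit-Gysin-pushforward} we have $\iota_{i!}(1) = \PD^{-1}\iota_{i*}[Z_i] = c_{r_i}(E_i)$ in $A^*(X)$, and likewise $(\iota_i\times\id_{\BGm})_!(1) = c_{r_i}(E_i\boxtimes\cO) = e(E_i\boxtimes\cO)$ in $A^*(X\times\BGm)$ (cf.\ the self-intersection formula, \cite{Fulton_Intersection_theory}).

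Granting these, for the second equality I would argue as follows. Write $\prod_{k=1}^{m_i-1}\sum_{j=0}^{r_i}(k\psi_i)^{r_i-j}c_j(N_i)$, appearing in the second expression of Theorem~\ref{thm:naive_tangencies_to_descendents}, as a polynomial in $\psi_i$; by the first observation every coefficient is of the form $\iota_i^*\alpha$ with $\alpha\in A^*(X)$. Applying $\iota_{i!}$ coefficientwise and using the projection formula $\iota_{i!}(\iota_i^*\alpha) = \iota_{i!}(1)\cdot\alpha = c_{r_i}(E_i)\cdot\alpha$ turns the polynomial into $c_{r_i}(E_i)\cdot\prod_{k=1}^{m_i-1}\sum_{j=0}^{r_i}(k\psi_i)^{r_i-j}c_j(E_i)$. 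Since the $k=0$ factor of $\prod_{k=0}^{m_i-1}\sum_{j=0}^{r_i}(k\psi_i)^{r_i-j}c_j(E_i)$ is exactly $\sum_{j=0}^{r_i}0^{r_i-j}c_j(E_i) = c_{r_i}(E_i)$, this reassembles into $\prod_{k=0}^{m_i-1}\sum_{j=0}^{r_i}(k\psi_i)^{r_i-j}c_j(E_i)$, and applying $\ev_i^*$ to the coefficients yields the second displayed formula. For the first equality I would run the identical manipulation at the level of $A^*(X\times\BGm)$, starting from $(\iota_i\times\id_{\BGm})_!\prod_{k=1}^{m_i-1}e(N_i\boxtimes\Lambda^{\otimes(-k)})$: here $N_i\boxtimes\Lambda^{\otimes(-k)} = (\iota_i\times\id_{\BGm})^*(E_i\boxtimes\Lambda^{\otimes(-k)})$, the projection formula contributes the factor $(\iota_i\times\id_{\BGm})_!(1) = e(E_i\boxtimes\cO)$, which is the missing $k=0$ term, and one then pulls back along $\ev_i\times\varphi_i$ (a ring homomorphism on operational Chow). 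That the two resulting formulas coincide is the splitting-principle identity $e(E_i\otimes\Lambda^{\otimes(-k)}) = \sum_{j=0}^{r_i}c_j(E_i)(-k\,c_1(\Lambda))^{r_i-j}$ together with $\varphi_i^*c_1(\Lambda) = c_1(L_i) = -\psi_i$ (as $L_i$ is the tangent line bundle), exactly the computation already carried out at the end of the proof of Theorem~\ref{thm:naive_tangencies_to_descendents}.

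I do not anticipate any serious obstacle. The only steps requiring real care are the identity $\iota_{i!}(1) = c_{r_i}(E_i)$ — the one place transversality of the section genuinely enters, guaranteeing that $\iota_i$ is a regular embedding and that its virtual fundamental class is the ordinary fundamental class of $Z_i$ — and the sign bookkeeping stemming from $L_i$ being the tangent rather than the cotangent line bundle. Everything else is the projection formula for the operational Gysin pushforward and the elementary observation that the degenerate $k=0$ factor reproduces the top Chern class.
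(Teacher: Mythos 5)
Your argument is correct and is exactly the intended derivation: the paper leaves this corollary as an immediate specialization of Theorem~\ref{thm:naive_tangencies_to_descendents}, using $N_i=\iota_i^*E_i$, the identity $\iota_{i!}(1)=c_{r_i}(E_i)$ (equivalently $(\iota_i\times\id_{\BGm})_!(1)=e(E_i\boxtimes\cO)$) coming from transversality, and the projection formula to absorb the pushforward as the degenerate $k=0$ factor of the product. Your sign bookkeeping ($\varphi_i^*c_1(\Lambda)=c_1(L_i)=-\psi_i$, hence the factors $(k\psi_i)^{r_i-j}$) also matches the conventions used at the end of the proof of the theorem, so there is nothing to add.
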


\begin{corollary}
  \label{cor:divisor}
  As a special case of the previous corollary, if $Z_i$ is a Cartier divisor, and hence given as the zero section of a line bundle $L$,
  then
  \begin{equation}
    \label{eq:divisor}
    (\ev_i^{m_i})^* \zeta_{i!}(1)
    = \prod_{k = 0}^{m_i - 1} (k\psi_i + \ev_i^* c_1(L))
    = \sum_{k = 0}^{m_i} \genfrac[]{0pt}{1}{m_i}{k} \psi_i^{m_i - k} (\ev_i^* c_1(L))^k,
  \end{equation}
  where $\genfrac[]{0pt}{1}{m_i}{k}$ denotes the unsigned Stirling numbers of the first kind.
\end{corollary}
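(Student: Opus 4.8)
The plan is to derive \eqref{eq:divisor} by specializing the previous corollary to a line bundle and then recognizing the resulting expansion through the classical generating function for the unsigned Stirling numbers of the first kind. First I would recall, as already noted in the statement, that a Cartier divisor $Z_i \subset X$ is the zero locus of the tautological section of the line bundle $L = \cO_X(Z_i)$, and that this section is regular because the local equation of $Z_i$ is a non-zero-divisor; hence the previous corollary applies with $E_i = L$ and $r_i = \rank(E_i) = 1$. Substituting $r_i = 1$ and $c_0(L) = 1$ into that corollary, the inner sum $\sum_{j=0}^{r_i}(k\psi_i)^{r_i-j}\ev_i^* c_j(E_i)$ collapses to $k\psi_i + \ev_i^* c_1(L)$, which gives the first equality
\[
  (\ev_i^{m_i})^* \zeta_{i!}(1) = \prod_{k=0}^{m_i-1}\bigl(k\psi_i + \ev_i^* c_1(L)\bigr).
\]

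For the second equality I would expand this product purely formally. Write $P(x,y) = \prod_{k=0}^{m_i-1}(x + k y)$, a homogeneous polynomial of degree $m_i$ in two variables. Setting $y = 1$ turns $P(x,1)$ into the rising factorial $x(x+1)\cdots(x+m_i-1)$, whose expansion $\sum_{k=0}^{m_i}\genfrac[]{0pt}{1}{m_i}{k}x^k$ is precisely the defining identity for the unsigned Stirling numbers of the first kind. Restoring the second variable by homogeneity, $P(x,y) = y^{m_i}P(x/y,1)$, yields
\[
  \prod_{k=0}^{m_i-1}(x + k y) = \sum_{k=0}^{m_i}\genfrac[]{0pt}{1}{m_i}{k}\,y^{m_i-k}\,x^k,
\]
and substituting $x = \ev_i^* c_1(L)$ and $y = \psi_i$ gives exactly \eqref{eq:divisor}. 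As a sanity check, the $k=0$ factor of the product is $x$, so $x$ divides $P$, consistent with $\genfrac[]{0pt}{1}{m_i}{0} = 0$ for $m_i \geq 1$.

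I do not expect any real obstacle here: the only ingredient beyond the previous corollary is the standard combinatorial identity expressing rising factorials through Stirling numbers of the first kind, so the argument amounts to a one-line specialization followed by a degree bookkeeping. The only point requiring a word of care is the observation that the tautological section of $\cO_X(Z_i)$ is a regular section, so that the hypothesis ``transverse section'' of the previous corollary is met; this is immediate from $Z_i$ being a Cartier divisor.
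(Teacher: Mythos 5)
Your proposal is correct and follows essentially the same route as the paper, which treats this corollary as an immediate specialization of the rank-one case of the preceding corollary (with $E_i = L$, $r_i = 1$) combined with the standard rising-factorial expansion $\prod_{k=0}^{m_i-1}(x+ky) = \sum_{k=0}^{m_i}\genfrac[]{0pt}{1}{m_i}{k}y^{m_i-k}x^k$ defining the unsigned Stirling numbers of the first kind. Your extra remark that the tautological section of $\cO_X(Z_i)$ is regular, so the hypothesis of the previous corollary is met, is a sensible (if brief) justification of a point the paper leaves implicit.
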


\begin{remark}
  The Gromov--Witten/Hurwitz correspondence of \cite{Okounkov_Gromov-Witten_theory_Hurwitz_theory_and_completed_cycles} for the Gromov--Witten theory of curves expresses a descendent point insertion as a linear combination of relative insertions, whose coefficients are \emph{completed cycles}.
  In a similar vein, we may recursively rewrite \eqref{eq:divisor} to express descendants in terms of naive tangency conditions:
  \begin{equation*}
    (m_i - 1)! \ev_i^*(c_1(L)) \psi_i^{m_i - 1}
    = \sum_{k = 1}^{m_i} c(m_i, k) (\ev_i^*(c_1(L)))^{m_i - k} (\ev_i^k)^* \zeta_{i!}(1)
  \end{equation*}
  for some constants $c(m, k) \in \QQ$ with $c(m, m) = 1$.
\end{remark}

\begin{example} \label{ex:curve}
  Let $X$ be a curve, and $Z_i$ a point.
  In this case, the $m_i$-th naive tangency condition translates to
  \begin{equation*}
    (\ev_i^{m_i})^* \zeta_{i!}(1)
    = (m_i - 1)! \psi_i^{m_i - 1} \ev_i^* \PD^{-1} [Z_i].
  \end{equation*}
  This recovers the insertion considered in \cite[Proposition~1.1]{Okounkov_Gromov-Witten_theory_Hurwitz_theory_and_completed_cycles}.
\end{example}
\begin{example} \label{ex:point}
  More generally, let $Z_i$ be a point in an $N$-fold $X$.
  Then, the naive tangency condition becomes
  \begin{equation*}
    (\ev_i^{m_i})^* \zeta_{i!}(1)
    = \big((m_i - 1)! \psi_i^{m_i - 1}\big)^N \ev_i^* \PD^{-1} [\pt].
  \end{equation*}
\end{example}

\section{Examples and mirror symmetry} \label{sec:examples}

In this section, we compute several examples of Gromov--Witten invariants with naive tangency conditions.
We start with the case of curves, where we relate Gromov--Witten invariants with naive tangencies to Hurwitz numbers with completed cycles.
Next, we consider the case of surfaces.
We first compute several examples where we compare the invariants to enumerative counts, concluding with Gathmann's example of conics tangent to a plane curve in $5$ points.
After this, we study the counts of rational curves naively tangent to an anticanonical divisor on a del Pezzo surface, and relate them to the local Gromov--Witten invariants via mirror symmetry.

\subsection{Curves and Hurwitz numbers} \label{sec:curves}

Let $X$ be a smooth projective curve with $n$ points $q_1, \dots, q_n$.
Let $d$, $k_1, \dots, k_n$ be positive integers.
Denote by
\[ \bigg\langle\prod_{i=1}^n k_i q_i\bigg\rangle_d^{\bullet X} \]
the disconnected Gromov--Witten invariants of degree $d$, with naive tangencies at point $q_i$ with order $k_i$, and with genus determined by the virtual dimension constraint.
It follows from Example \ref{ex:curve} and \cite[(0.24)]{Okounkov_Gromov-Witten_theory_Hurwitz_theory_and_completed_cycles} that
\[ \bigg\langle\prod_{i=1}^n k_i q_i\bigg\rangle_d^{\bullet X} = \mathsf H_d^X\Big(\overline{(k_1)},\dots,\overline{(k_n)}
\Big), \]
where the right-hand side denotes the disconnected Hurwitz numbers with completed cycles.
Such Hurwitz numbers can be computed explicitly as follows.
For a partition $\lambda = (\lambda_1, \lambda_2, \dots)$ of $d$, let $\mathbf p_k(\lambda)$ denote the coefficient of $z^k$ in the Laurent series expansion of
\[k!\sum_{i=0}^\infty e^{z(\lambda_i-i+\frac{1}{2})},\]
see \cite[\S0.4.3]{Okounkov_Gromov-Witten_theory_Hurwitz_theory_and_completed_cycles} for details on how to interpret this infinite sum.
Then by \cite[(0.25)]{Okounkov_Gromov-Witten_theory_Hurwitz_theory_and_completed_cycles}, we obtain
\[ \bigg\langle\prod_{i=1}^n k_i q_i\bigg\rangle_d^{\bullet X} = \sum_{\abs{\lambda}=d} \bigg(\frac{\dim\lambda}{d!}\bigg)^{2-2g(X)} \prod_{i=1}^n \frac{\mathbf p_{k_i}(\lambda)}{k_i}. \]

\bigskip
Next, we study Gromov--Witten invariants with naive tangencies in the case of surfaces.

\subsection{Lines tangent to a conic} \label{sec:lines_tangent_to_conic}

We consider the Gromov--Witten invariant with naive tangencies corresponding to lines in $X = \PP^2$ through a point $p \in X$ and tangent to a disjoint conic $Z_2 = C$:
\begin{equation*}
  \langle p, 2 C\rangle_{0, 1}^X = \int_{[\bcM_{0, 2}(X, 1)]^\vir} \ev_1^*(\PD^{-1} [p]) (\ev_2^2)^* \zeta_{2!}(1).
\end{equation*}

To compute this number, we first rewrite the invariant using Corrolary~\ref{cor:divisor}
\[
  \int_{[\bcM_{0, 2}(X, 1)]^\vir} \ev_1^*(H^2) \ev_2^*(2H) (\psi_2 + \ev_2^*(2H))
  = 2\langle \tau_1(H) \tau_0(H^2) \rangle_{0, 1}^X + 4\langle \tau_0(H^2) \tau_0(H^2)\rangle_{0, 1}^X.
\]
Here, $H$ denotes the hyperplane class, and we use the bracket notation
\begin{equation*}
  \langle \tau_{a_1}(\alpha_1) \cdots \tau_{a_n}(\alpha_n)\rangle_{g, d}^X
  = \int_{[\bcM_{g, n}(X, d)]^\vir} \prod_{i = 1}^n \psi_i^{a_i} \ev_i^*(\alpha_i).
\end{equation*}

We compute the second summand via the well-known Gromov--Witten invariant
\begin{equation*}
  \langle \tau_0(H^2) \tau_0(H^2)\rangle_{0, 1}^X = 1,
\end{equation*}
which is equal to the number of lines through two points.
To compute the first summand, we combine an application of the divisor equation
\begin{equation*}
  \langle \tau_1(H) \tau_0(H^2) \tau_0(H)\rangle_{0, 1}^X
  = \langle \tau_1(H) \tau_0(H^2)\rangle_{0, 1}^X + \langle \tau_0(H^2) \tau_0(H^2)\rangle_{0, 1}^X
\end{equation*}
and of the genus zero topological recursion relation
\begin{equation*}
  \langle \tau_1(H) \tau_0(H^2) \tau_0(H)\rangle_{0, 1}^X
  = \sum_{\alpha = 0}^2 \langle \tau_0(H^2) \tau_0(H) \tau_0(H^\alpha)\rangle_{0, 0}^X \langle \tau_0(H^{2 - \alpha}) \tau_0(H)\rangle_{0, 1}^X
  = 0.
\end{equation*}
Therefore, we have
\begin{equation*}
  \langle \tau_1(H) \tau_0(H^2)\rangle_{0, 1}^X = -1,
\end{equation*}
and we conclude
\begin{equation*}
  \langle p, 2 C\rangle_{0, 1}^X = 2.
\end{equation*}

\begin{remark}
  This agrees with the enumerative fact that there are exactly two lines passing through $p$ and tangent to $C$.
\end{remark}

\begin{remark}
  We could repeat the same computation with $C$ replaced by a plane curve of degree $d$.
  In this more general case, we have
  \begin{equation*}
    \langle p, 2 C\rangle_{0, 1}^X = d(d-1).
  \end{equation*}
  In particular, this vanishes if $d = 1$, which is in agreement with the geometric fact that the maximal tangency order between two lines is $1$.
  In genus $g \ge 2$, there is no topological recursion relation for $\psi_2$, and thus we do not expect Gromov--Witten invariants to vanish when there is an insertion with naive tangency of order exceeding $d$.
\end{remark}

\subsection{Conics tangent to a line} \label{sec:conics_tangent_to_line}

We now consider the naive Gromov--Witten invariant corresponding to counting conics in $X = \PP^2$ through $4$ points $Z_1$, $Z_2$, $Z_3$,
$Z_4$, and tangent to a line $Z_5$:
\begin{align*}
  &\langle Z_1, Z_2, Z_3, Z_4, 2 Z_5\rangle_{0, 2}^X \\
  = &\int_{[\bcM_{0, 5}(X, 2)]^\vir} \prod_{i = 1}^4 \ev_i^*(H^2) \cdot \ev_5^*(H) \cdot (\psi_5 + \ev_5^*(H)) \\
  = &\langle \tau_1(H) \tau_0(H^2) \tau_0(H^2) \tau_0(H^2) \tau_0(H^2) \rangle_{0, 2} \\
  & + \langle \tau_0(H^2) \tau_0(H^2) \tau_0(H^2) \tau_0(H^2) \tau_0(H^2)\rangle_{0, 2},
\end{align*}
where we again used Corollary~\ref{cor:divisor}.
For the second term, note that
\begin{equation*}
  \langle \tau_0(H^2) \tau_0(H^2) \tau_0(H^2) \tau_0(H^2) \tau_0(H^2)\rangle_{0, 2} = 1.
\end{equation*}
We may compute the first term via the topological recursion relations:
\begin{multline*}
  \langle \tau_1(H) \tau_0(H^2) \tau_0(H^2) \tau_0(H^2) \tau_0(H^2) \rangle_{0, 2} \\
  = \langle \tau_0(H^2) \tau_0(H^2) \tau_0(H) \rangle_{0, 1} \langle \tau_0(H) \tau_0(H) \tau_0(H^2) \tau_0(H^2) \rangle_{0, 1} = 1.
\end{multline*}
So, we conclude
\begin{equation*}
  \langle Z_1, Z_2, Z_3, Z_4, 2 Z_5\rangle_{0, 2}^X = 2.
\end{equation*}
\begin{remark}
  This is in agreement with the fact that there are two conics through $4$ general points and tangent to a general line.
\end{remark}

\subsection{Conics with multiple tangencies}

We now give an example of a Gromov--Witten invariant with multiple incidence conditions at one point, namely, we consider the invariant corresponding to conics in $X = \PP^2$ passing through three points $Z_1$, $Z_2$, $Z_3$ at three marked points, and tangent to a line $Z_4$ and passing through another line $Z_5$ at the fourth marked point:
\begin{align*}
  &\langle Z_1, Z_2, Z_3, 2Z_4 \cap Z_5\rangle_{0, 2}^X \\
  = &\int_{[\bcM_{0, 4}(X, 2)]^\vir} \prod_{i = 1}^3 \ev_i^*(H^2) \cdot \ev_4^*(H) \cdot (\ev_4^2)^* \zeta_{4!}(1) \\
  = &\int_{[\bcM_{0, 4}(X, 2)]^\vir} \prod_{i = 1}^3 \ev_i^*(H^2) \cdot \ev_4^*(H) \cdot \ev_4^*(H) \cdot (\psi_4 + \ev_4^*(H)) \\
  = &\langle \tau_1(H^2) \tau_0(H^2) \tau_0(H^2) \tau_0(H^2) \rangle_{0, 2} .
\end{align*}
By the topological recursion relations,
\[
  \langle Z_1, Z_2, Z_3, 2Z_4 \cap Z_5\rangle_{0, 2}^X
  = \langle \tau_0(H^2) \tau_0(H^2) \tau_0(H) \rangle_{0, 1} \langle \tau_0(H) \tau_0(H^2) \tau_0(H^2) \rangle_{0, 1} = 1 \cdot 1 = 1.
\]
\begin{remark}
  This is also in agreement with the corresponding enumerative count.
  After imposing the conics to pass through three general points and the intersection of the two general lines, we are left with a $\PP^1$-family of conics, and imposing the tangency along $Z_5$ is then only a linear condition.
\end{remark}

\subsection{Conics tangent to a plane curve}
\label{sec:gathmann}

In \cite{Ga05}, Gathmann considered the enumerative count $n_d$ of plane conics that are simply tangent to a fixed plane curve $C$ of degree $d$ in $5$ points.
This count is of interest due to a connection to counts on K3 surfaces given as a double cover of $\PP^2$ branched along a sextic.
Gathmann finds
\begin{equation*}
  n_d = \frac 1{5!} d(d - 3)(d - 4)(d^7 + 12 d^6 - 18d^5 - 540d^4 + 251 d^3 + 5712 d^2 - 1458 d - 14580).
\end{equation*}
Let us compute the corresponding Gromov--Witten invariants with naive tangencies
\begin{equation*}
  n_d^{\GW} = \frac 1{5!} \langle 2 C, 2 C, 2 C, 2 C, 2 C\rangle_{0, 2}.
\end{equation*}

By Corollary~\ref{cor:divisor},
\begin{align*}
  5! n_d^{\GW}
  &= \int_{[\bcM_{0, 5}(X, 2)]^\vir} \prod_{i = 1}^5 \ev_i^*(dH) \cdot (\psi_i + \ev_i^*(dH)) \\
  &= \sum_{i = 0}^5 \binom{5}{i} d^{5 + i} \langle (\tau_0(H^2))^i (\tau_1(H))^{5-i} \rangle_{0, 2}.
\end{align*}
By the computations in Section~\ref{sec:conics_tangent_to_line} and using the axioms of Gromov--Witten invariants, we find
\begin{equation*}
  n_d^{\GW} = \frac 1{5!} d^8 (d^2 + 5d + 20).
\end{equation*}
Note that both $n_d$ and $n_d^{\GW}$ are degree $10$ polynomials in $d$ whose leading and first subleading coefficients agree.
Nevertheless, the computation shows that Gromov--Witten invariants with naive tangencies can be far from the enumerative counts in this example.

\subsection{Curves tangent to an anti-canonical divisor on a del Pezzo surface}
\label{sec:arbitrary-del-Pezzo}

In \cite{GGR19}, van Garrel, Graber and Ruddat established a relation between log Gromov--Witten invariants with maximal tangency along a smooth divisor and ordinary Gromov--Witten invariants of a local geometry.
As a special case, the relative Gromov--Witten theory of $\PP^2$ relative to an elliptic curve was studied by Gathmann in \cite[Example~2.2]{Ga03}, and related to local mirror symmetry.
In this section, we establish a relationship between Gromov--Witten invariants with naive tangencies and local Gromov--Witten invariants, in the case of del Pezzo surfaces.

Let $X$ be a del Pezzo surface of degree $d_X$, and $D$ be an anti-canonical divisor.
We consider the Gromov--Witten invariants with maximal naive tangency at one point along $D$:
\begin{equation*}
  \langle (\beta \cdot D) D\rangle_{0, \beta}^X = \int_{[\bcM_{0, 1}(X, \beta)]^\vir} \left(\ev_1^{\beta \cdot D}\right)^* \zeta_{1!}(1) .
\end{equation*}
We will relate these invariants to local Gromov--Witten invariants of $X$, which uses local mirror symmetry in a crucial way.

Let $\Lambda$ be the completion of $\QQ[\NE(X, \ZZ)]$ along its maximal monomial ideal, with basis $\{q^\beta: \beta \in \NE(X, \ZZ)\}$.
We define a generating series $F(q) \in \Lambda$ of Gromov--Witten invariants with naive tangencies via
\begin{equation*}
  F(q) = \sum_\beta q^\beta \langle (\beta \cdot D) D\rangle_{0, \beta}^X
  = \sum_\beta q^\beta \int_{[\bcM_{0, 1}(X, \beta)]^\vir} \prod_{k = 0}^{\beta \cdot D - 1} (k\psi_1 + \ev_1^*(D)),
\end{equation*}
where we sum over all effective curve classes $\beta \in \NE(X, \ZZ)$.
Here, and in what follows, we will follow the convention that integrals over the empty moduli space $\bcM_{0, 1}(X, 0)$ are defined to be zero.

We begin by expressing $F(q)$ in terms of the small $J$-function of $X$
\begin{equation*}
  J_X = \sum_\beta q^\beta J_X^\beta
  = z + \sum_\beta q^\beta \PD^{-1} \ev_{1*} \frac{[\bcM_{0, 1}(X, d)]^\vir}{z - \psi_1} ,
\end{equation*}
which is viewed as an element of $zH^*(X) \otimes \Lambda[\![z^{-1}]\!]$.

\begin{lemma}
  We have the identity
  \begin{equation*}
    F(q) = \Res_{z = 0} \int_X I_{\KX}(q)|_{q^\beta \mapsto (-1)^{\beta \cdot D} q^\beta},
  \end{equation*}
  where
  \begin{equation*}
    I_{\KX}(q) = \sum_\beta q^\beta J_X^\beta(z) \prod_{k = 0}^{\beta \cdot D - 1}(-D - kz)
  \end{equation*}
  is the ``hypergeometric modification'' (in the sense of \cite[\S7]{CoGi07}) of $J_X$.
\end{lemma}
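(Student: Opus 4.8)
The plan is to prove the identity by expanding both sides as explicit $q$-series whose coefficients are descendent integrals over $\bcM_{0,1}(X,\beta)$, and then matching them monomial by monomial. Throughout I would abbreviate $n_\beta = \beta\cdot D$ and $\phi_j^\beta := \PD^{-1}\ev_{1*}\big(\psi_1^{\,j}\cap[\bcM_{0,1}(X,\beta)]^\vir\big)\in H^*(X)$; expanding $1/(z-\psi_1)$ as a geometric series, the definition of the small $J$-function gives $J_X^\beta(z)=\sum_{j\ge 0}z^{-j-1}\phi_j^\beta$ for $\beta\ne 0$ (and $J_X^0=z$), so the classes $\phi_j^\beta$ are exactly the ingredients of $I_{\KX}$.

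\emph{Left-hand side.} Applying Corollary~\ref{cor:divisor} to the anticanonical divisor $D$ at the marked point, I would rewrite the maximal naive tangency insertion as
\[
  \prod_{k=0}^{n_\beta-1}\big(k\psi_1+\ev_1^*(D)\big)=\sum_{k}\genfrac[]{0pt}{1}{n_\beta}{k}\,\psi_1^{\,n_\beta-k}\,\ev_1^*(D^k),
\]
and then the projection formula for $\ev_1$ yields
\[
  F(q)=\sum_\beta q^\beta\sum_{k}\genfrac[]{0pt}{1}{n_\beta}{k}\int_X D^k\,\phi_{\,n_\beta-k}^\beta .
\]

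\emph{Right-hand side.} The hypergeometric factor is the \emph{same} rising-factorial polynomial, now in the formal variable $z$: the algebraic identity behind Corollary~\ref{cor:divisor} gives $\prod_{k=0}^{n_\beta-1}(-D-kz)=(-1)^{n_\beta}\sum_\ell\genfrac[]{0pt}{1}{n_\beta}{\ell}D^\ell z^{\,n_\beta-\ell}$. Multiplying by $J_X^\beta(z)=\sum_j z^{-j-1}\phi_j^\beta$ and taking $\Res_{z=0}$ forces $j=n_\beta-\ell$; since $\int_X$ acts only on the $H^*(X)$-factor it commutes with $\Res_{z=0}$, so
\[
  \Res_{z=0}\int_X I_{\KX}(q)=\sum_\beta(-1)^{n_\beta}q^\beta\sum_{\ell}\genfrac[]{0pt}{1}{n_\beta}{\ell}\int_X D^\ell\,\phi_{\,n_\beta-\ell}^\beta .
\]
Comparing with the formula for $F(q)$, the two series agree term by term up to the sign $(-1)^{n_\beta}$; since $q^\beta\mapsto(-1)^{n_\beta}q^\beta$ is an involution, applying it to the last display recovers $F(q)$, which is the claim. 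I would close by noting the $\beta=0$ terms are harmless: $\bcM_{0,1}(X,0)$ is unstable, hence empty (matching the stated convention), while on the right the $\beta=0$ summand of $I_{\KX}$ is $J_X^0(z)=z$, whose residue at $z=0$ vanishes.

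The computation is essentially bookkeeping, and the only delicate points are: (i) pinning down the normalization of the small $J$-function and the sign conventions of the hypergeometric modification of \cite{CoGi07}, so that the two rising-factorial expansions really match under $\psi_1\leftrightarrow z$, $\ev_1^*(D)\leftrightarrow D$, and the sign $(-1)^{n_\beta}$ appears exactly as written; and (ii) the degree bookkeeping — $\vdim\bcM_{0,1}(X,\beta)=\beta\cdot D$ for a del Pezzo surface, and $D^3=0$ on a surface — which guarantees that each inner sum is finite and that every $\int_X$ pairs classes of complementary degree, so that all manipulations genuinely take place in $\Lambda$. I expect (i), reconciling the conventions, to be the main thing to get right.
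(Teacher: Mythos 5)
Your proof is correct, and it verifies the same identity as the paper, but by a different mechanism. The paper's proof never expands anything: it inserts the kernel $1/(z-\psi_1)$ under $\Res_{z=0}$ (which reproduces $F(q)$ because the residue of $P(\psi_1)/(z-\psi_1)$ is $P(\psi_1)$), then replaces $\psi_1$ by $z$ inside the product $\prod_{k}(\ev_1^*(D)+k\psi_1)$ using the divisibility $(z-\psi)\mid (f(\psi)-f(z))$ — the discrepancy is a polynomial in $z$, hence residue-free — and finally recognizes the pushforward as $J_X^\beta$ via the projection formula. You instead expand both the tangency insertion and the hypergeometric factor $\prod_k(-D-kz)$ in the Stirling-number basis of Corollary~\ref{cor:divisor}, write $J_X^\beta(z)=\sum_j z^{-j-1}\phi_j^\beta$, and match coefficients, with the residue selecting $j=n_\beta-\ell$; the sign $(-1)^{\beta\cdot D}$ and the $\beta=0$ term are handled exactly as in the paper. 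The paper's divisibility trick is slicker and avoids all coefficient bookkeeping (and generalizes immediately to any polynomial insertion in $\psi_1$), while your term-by-term computation is more elementary and makes the role of the Stirling expansion and the mirror-side sign completely explicit; both hinge on the same projection-formula step identifying $\ev_{1*}$ of $\psi$-powers with the coefficients of $J_X^\beta$.
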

\begin{proof}
  This follows from
  \begin{align*}
    F(q) &= \Res_{z = 0} \sum_\beta q^\beta \int_{[\bcM_{0, 1}(X, \beta)]^\vir} \frac 1{z - \psi_1} \prod_{k = 0}^{\beta \cdot D - 1} (\ev_1^*(D) + k\psi_1) \\
         &= \Res_{z = 0} \sum_\beta q^\beta \int_{[\bcM_{0, 1}(X, \beta)]^\vir} \frac 1{z - \psi_1} \prod_{k = 0}^{\beta \cdot D - 1} (\ev_1^*(D) + kz) \\
         &= \Res_{z = 0} \sum_\beta q^\beta \int_X J_X^\beta \prod_{k = 0}^{\beta \cdot D - 1} (D + kz),
  \end{align*}
  where in the second equality, we used that $(z - \psi) | (f(\psi) - f(z))$ for any power series $f$.
  Note that $J_X^0 = z$ does not contribute to the residue.
\end{proof}

\begin{lemma}
  There is an expansion
  \begin{equation*}
    I_{\KX}(q) = z + I_1(q) D + I_2(q) \frac{D^2}z,
  \end{equation*}
  with $I_1(q), I_2(q) \in \Lambda$.
\end{lemma}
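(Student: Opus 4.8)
The plan is to compute $I_{\KX}(q)$ one curve class at a time, combining the cohomological‑degree constraints on the coefficients $J_X^\beta$ with the fact that $D^3 = 0$ in $H^*(X)$, which holds because $X$ is a surface. Throughout I write $N = \beta\cdot D$; since $D = -K_X$ is ample and $\beta$ is effective and nonzero, $N \geq 1$. I will freely use that for a del Pezzo surface the cycle class map $A_*(X)_\QQ \to H^*(X;\QQ)$ is an isomorphism, so that $\PD$ and cup products behave as expected.

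First I would pin down the shape of $J_X^\beta$ for $\beta \neq 0$. Expanding $1/(z - \psi_1)$ as a geometric series in $\psi_1/z$ gives $J_X^\beta = \sum_{a \geq 0} z^{-a-1}\, \PD^{-1} \ev_{1*}\big(\psi_1^a \cap [\bcM_{0,1}(X,\beta)]^\vir\big)$. The virtual dimension of $\bcM_{0,1}(X,\beta)$ equals $N$ by the dimension formula, so $\psi_1^a \cap [\bcM_{0,1}(X,\beta)]^\vir$ has dimension $N - a$, and its pushforward to the surface $X$ vanishes unless $N - 2 \leq a \leq N$. Hence $J_X^\beta = a_\beta\, z^{1-N} + \gamma_\beta\, z^{-N} + b_\beta\, [\pt]\, z^{-1-N}$ with $a_\beta, b_\beta \in \QQ$ and $\gamma_\beta \in H^2(X; \QQ)$, the first term being absent when $N = 1$. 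The class $\beta = 0$ contributes $J_X^0 = z$ times the empty product, i.e.\ just $z$.

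Next I would expand the hypergeometric factor modulo $D^3$. Pulling out the $k = 0$ factor $-D$ and writing $\prod_{k=1}^{N-1}(D + kz) = (N-1)!\, z^{N-1} \prod_{k=1}^{N-1}(1 + D/(kz))$, reduction modulo $D^2$ gives $\prod_{k=0}^{N-1}(-D - kz) \equiv (-1)^N (N-1)!\big(z^{N-1} D + H_{N-1}\, z^{N-2} D^2\big)$ in $H^*(X)$, where $H_{N-1} = \sum_{k=1}^{N-1} 1/k$. Multiplying this by $J_X^\beta$ and using $D \cdot [\pt] = D^2 \cdot [\pt] = D^2 \cdot \gamma_\beta = D^3 = 0$ together with $D^2 = d_X\, [\pt]$, only three monomials survive: $a_\beta z^{1-N}$ against $z^{N-1} D$, yielding a $z^0$‑multiple of $D$; $a_\beta z^{1-N}$ against $H_{N-1} z^{N-2} D^2$; and $\gamma_\beta z^{-N}$ against $z^{N-1} D$ (using $\gamma_\beta \cdot D \in H^4(X)$), the latter two yielding $z^{-1}$‑multiples of $D^2$. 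Summing over $\beta \neq 0$ and adding the $z$ from $\beta = 0$, I obtain $I_{\KX}(q) = z + I_1(q)\, D + I_2(q)\, D^2/z$, where $I_1(q)$ and $I_2(q)$ are explicit formal sums over $\NE(X, \ZZ)$ with rational coefficients assembled from the $a_\beta$, the intersection numbers $\gamma_\beta \cdot D$, and the harmonic numbers $H_{N-1}$; in particular $I_1(q), I_2(q) \in \Lambda$.

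The only point requiring care is the cohomological‑degree bookkeeping in the first step: getting the exponents of $z$ and the range $N - 2 \leq a \leq N$ exactly right, and handling the edge case $N = 1$, where the $z^{1-N}$‑term is absent. Once the shape of $J_X^\beta$ is fixed, the rest is a direct computation in the truncated ring $H^*(X)/(D^3)$, and there are no convergence issues since everything is formal in the Novikov variables.
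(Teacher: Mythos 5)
Your proof is correct and is essentially the paper's argument made explicit: the paper packages your virtual-dimension bookkeeping into the single statement that $J_X^\beta$ is homogeneous of degree $2-2(\beta\cdot D)$, hence $I_{\KX}$ is homogeneous of degree $2$, and then uses divisibility by $D$ and the vanishing of $H^{>4}(X)$ exactly as you do. Your term-by-term expansion of $J_X^\beta$ and of the hypergeometric factor modulo $D^3$ reaches the same conclusion (and as a bonus identifies the coefficients explicitly), with the edge case $N=1$ handled correctly.
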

\begin{proof}
  By virtual dimension considerations the series
  $J_X^\beta(z)$
  is homogeneous of degree $2 - 2(\beta \cdot D)$ under the convention that $\alpha \cdot z^b$ with $\alpha \in H^d(X)$ has
  degree $d + 2b$.
  Therefore, $I_{\KX}(q)$ is homogeneous of degree $2$ if we further
  set the Novikov ring to be homogeneous of degree zero.
  Thus, there is an expansion
  \begin{equation*}
    I_{\KX}(q) = z + \mathbf I_1(q) + \mathbf I_2(q) \frac 1z + O(z^{-2}),
  \end{equation*}
  with $\mathbf I_1(q) \in H^2(X) \otimes \Lambda$ and $\mathbf I_2(q) \in H^4(X) \otimes \Lambda$.
  Furthermore, note that all except the $\beta = 0$ summand of $I_\KX(q)$ are divisible by $D$, so that $\mathbf I_1(q)$ must be a multiple of $D$.
  Since $X$ is a surface, $\mathbf I_2(q)$ is a multiple of $D^2$, and $I_{\KX}(q)$ has no terms past the $z^{-1}$-term.
\end{proof}

\begin{corollary}
  \label{cor:naive-vs-loc}
  We have the explicit formula
  \begin{equation*}
    F(q) = d_X I_2(q)|_{q^\beta \mapsto (-1)^{\beta \cdot D} q^\beta}.
  \end{equation*}
\end{corollary}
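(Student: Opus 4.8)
The plan is to combine the two preceding lemmas and extract the residue explicitly. From the first lemma we have
\[
  F(q) = \Res_{z=0} \int_X I_{\KX}(q)\big|_{q^\beta \mapsto (-1)^{\beta\cdot D} q^\beta},
\]
and from the second lemma we have the expansion $I_{\KX}(q) = z + I_1(q) D + I_2(q) D^2/z$. First I would substitute this expansion into the integral, obtaining termwise
\[
  \int_X I_{\KX}(q) = z\int_X 1 + I_1(q)\int_X D + \frac{I_2(q)}{z}\int_X D^2.
\]
Here $\int_X 1 = 0$ and $\int_X D = 0$ by dimension reasons (the integrand is not of top degree on the surface $X$), while $\int_X D^2 = D^2 = K_X^2 = d_X$ since $D$ is anti-canonical and $X$ has degree $d_X$. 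Hence $\int_X I_{\KX}(q) = d_X\, I_2(q)/z$.

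Next I would take the residue at $z = 0$: since only the $I_2(q) D^2/z$ term survives and it contributes a simple pole with residue $d_X\, I_2(q)$, we get $\Res_{z=0}\int_X I_{\KX}(q) = d_X\, I_2(q)$. Finally, applying the substitution $q^\beta \mapsto (-1)^{\beta\cdot D} q^\beta$ (which commutes with taking the residue and with the integration over $X$, as it only affects the Novikov variables) yields
\[
  F(q) = d_X\, I_2(q)\big|_{q^\beta \mapsto (-1)^{\beta\cdot D} q^\beta},
\]
as claimed.

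This argument is essentially bookkeeping; the only point requiring a moment of care is the vanishing of $\int_X 1$ and $\int_X D$, which is immediate from the grading conventions set up in the proof of the second lemma (the classes $1 \in H^0(X)$ and $D \in H^2(X)$ are not of top degree $4$ on the surface $X$), so there is no genuine obstacle. The substitution on Novikov variables is a ring automorphism of $\Lambda$ that fixes $z$ and acts as the identity on $H^*(X)$, so it passes freely through $\Res_{z=0}$ and $\int_X$.
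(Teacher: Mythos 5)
Your proposal is correct and is essentially the argument the paper intends: the corollary is immediate from the two preceding lemmas by substituting the expansion $I_{\KX}(q) = z + I_1(q) D + I_2(q) D^2/z$ into the residue formula and using $\int_X D^2 = K_X^2 = d_X$, with the Novikov substitution commuting with $\int_X$ and $\Res_{z=0}$. (A minor simplification: the $z$- and constant-in-$z$ terms are killed already by taking $\Res_{z=0}$, so the vanishing of $\int_X 1$ and $\int_X D$ is not even needed.)
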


\begin{example}
  \label{ex:local-P2}
  If $X = \PP^2$, by toric mirror symmetry (see \cite{Gi98b}), we have
  \begin{equation*}
    J_X = I_X = z\sum_{\beta = 0}^\infty q^\beta \frac 1{\prod_{k = 1}^\beta (H + iz)^3},
  \end{equation*}
  where $H$ denotes the hyperplane class.
  Therefore,
  \begin{equation*}
    I_{\KX} = z\sum_{\beta = 0}^\infty q^\beta \frac{\prod_{k = 0}^{3\beta - 1} (-3H - kz)}{\prod_{k = 1}^\beta (H + kz)^3},
  \end{equation*}
  and
  \begin{align*}
    I_1(q) &= \sum_{\beta = 0}^\infty (-q)^\beta \frac{(3\beta - 1)!}{\beta!^3}, \\
    I_2(q) &= \frac 13 \sum_{\beta = 0}^\infty (-q)^\beta \frac{(3\beta - 1)!}{\beta!^3} \sum_{k = \beta + 1}^{3\beta - 1} \frac 1k.
  \end{align*}
\end{example}

We next compare $F(q)$ to the generating function
\begin{equation*}
  F^\loc(q) = \sum_\beta q^\beta \int_{[\bcM_{0, 1}(\KX, \beta)]^\vir} \ev_1^*(D) \ \in\ \Lambda
\end{equation*}
of genus zero Gromov--Witten invariants of local $X$.
\begin{proposition}
  \label{prop:compute-Floc}
  We have the explicit formula
  \begin{align*}
    -F^\loc(Q) = d_X \left(I_2(q) - \frac 12 I_1^2(q)\right).
  \end{align*}
  under the variable change (mirror map)
  \begin{equation}
    \label{eq:mirror-map}
    Q^\beta = q^\beta \cdot \exp((\beta \cdot D) I_1(q)).
  \end{equation}
\end{proposition}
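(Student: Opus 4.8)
The plan is to invoke the main theorem of van Garrel–Graber–Ruddat \cite{GGR19}, which relates genus zero log Gromov–Witten invariants of $X$ with maximal tangency along a smooth nef divisor $D$ to genus zero local Gromov–Witten invariants of the total space $\KX = \mathcal{O}_X(-D)$, and to combine this with the well-known mirror-theoretic description of the local $I$-function. Specifically, the local mirror theorem (as in \cite{CoGi07}, see also \cite{Gi98b}) identifies the genus zero descendent potential of $\KX$, after the mirror map, with the hypergeometric modification $I_{\KX}(q)$ already introduced in the previous lemmas. The key input is that the twisted/local $J$-function $J_{\KX}$, written in Givental coordinates, has an expansion whose $z^0$ and $z^{-1}$ coefficients are governed precisely by $I_1(q)$ and $I_2(q)$, and whose relation to $J_X$ is through the Euler-class twist $\prod_{k=0}^{\beta\cdot D - 1}(-D - kz)$.

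First I would recall that, by \cite{GGR19}, the maximal-tangency log invariant $\langle (\beta\cdot D)D\rangle^{\log}_{0,\beta}$ and the local invariant appearing in $F^\loc$ differ only by the elementary factor $(-1)^{\beta\cdot D - 1}(\beta\cdot D)$; summing over $\beta$ converts this sign into the substitution $q^\beta \mapsto (-1)^{\beta\cdot D}q^\beta$ and converts the factor $\beta\cdot D$ into an application of $q\frac{d}{dq}$ along the direction dual to $D$. Next I would write the local genus zero potential in terms of Givental's formalism: the small $J$-function of $\KX$ lands on the Lagrangian cone, and the mirror theorem says it equals $I_{\KX}(q)$ after the change of variables \eqref{eq:mirror-map}, the exponent $(\beta\cdot D)I_1(q)$ being exactly the mirror map extracted from the $z^0$-coefficient of $I_{\KX}$. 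Then I would extract the generating function $F^\loc(Q)$ as the appropriate coefficient (the $D$-insertion picks out a component of the $z^{-1}$-term of the $J$-function), obtaining $-F^\loc(Q) = d_X\big(I_2(q) - \tfrac12 I_1^2(q)\big)$: the quadratic correction $-\tfrac12 I_1^2$ is precisely the standard discrepancy between the naive coefficient $\mathbf{I}_2$ of the $I$-function and the corresponding coefficient of the $J$-function under a nontrivial mirror map (the ``$I$ to $J$'' transformation, cf.\ \cite[\S7]{CoGi07}), and the overall constant $d_X = \int_X D^2$ comes from pairing $D^2$ against the fundamental class of the surface $X$.

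The main obstacle I anticipate is bookkeeping rather than conceptual: one must track carefully the precise normalization conventions relating the local invariants $\langle \tau_0(D)\rangle_{0,\beta}^{\KX}$ to the log invariants of \cite{GGR19} (including the sign $(-1)^{\beta\cdot D-1}$ and the multiplicity $\beta\cdot D$), and verify that the mirror map \eqref{eq:mirror-map} is exactly the one dictated by the $z^0$-coefficient of $I_{\KX}$ rather than a reparametrized version. A secondary subtlety is confirming that, because $X$ is a surface and $I_{\KX}$ truncates at the $z^{-1}$-term (by the degree count in the second lemma above), no higher Birkhoff-factorization corrections enter, so the transformation from $I_{\KX}$ to the local $J$-function is purely the substitution of the mirror map plus the single quadratic term $-\tfrac12 I_1^2(q)D^2/z$; this is where one genuinely uses that the local geometry is the canonical bundle of a del Pezzo \emph{surface}. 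Once these normalizations are pinned down, the identity follows by reading off the $D^2/z$-coefficient on both sides and integrating over $X$.
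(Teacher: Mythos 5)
Your core argument is essentially the paper's proof: $I_{\KX}$ lies on the Lagrangian cone of the local theory by the Coates--Givental local mirror theorem \cite[\S9--\S10]{CoGi07}, the mirror map \eqref{eq:mirror-map} is read off from the $z^0$-coefficient $I_1(q)D$, and the passage from $I$ to $J$ via the divisor equation (multiplication by $e^{-I_1(q)D/z}$) produces exactly the $-\tfrac12 I_1^2$ correction when one extracts the $D^2/z$-coefficient and integrates over $X$ to get the factor $d_X$. The one caveat is that your opening appeal to \cite{GGR19} is superfluous for this proposition: $F^\loc$ is defined directly in terms of local Gromov--Witten invariants of $\KX$, no log invariants appear in the statement, and the van Garrel--Graber--Ruddat comparison (with its sign and multiplicity factors) enters only afterwards, in deducing Corollary~\ref{cor:naive-to-log}; dropping that step leaves your remaining argument self-contained and in agreement with the paper.
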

\begin{proof}
  Consider the $J$-function
  \begin{equation*}
    J_{\KX}(t, q) = z + t - D \sum_\beta \sum_{k = 0}^\infty \frac{q^\beta}{k!} \PD^{-1} \ev_{1*} \left(\prod_{i = 2}^{k + 1} \ev_i^*(t) \cap \frac{[\bcM_{0, 1 + k}(\KX, \beta)]^\vir}{z - \psi_1}\right),
  \end{equation*}
  where $t \in H^*(X)$ is a parameter.
  Note that
  \begin{equation*}
    F^\loc(q) = -\Res_{z = 0} \int_X J_{\KX}(0, q),
  \end{equation*}
  and hence
  \begin{equation*}
    F^\loc(Q) = -\Res_{z = 0} \int_X J_{\KX}(0, Q).
  \end{equation*}

  By \cite[\S9 and \S10 (on Serre duality)]{CoGi07}, the hypergeometric modification $I_{\KX}(q)$ lies on Givental's Lagrangian cone for the Gromov--Witten theory of local $X$.
  Therefore,
  \begin{equation*}
    I_{\KX}(q) = J_{\KX}(I_1(q) D, q).
  \end{equation*}
  By repeated application of the divisor equation (see \cite[\S8]{CoGi07}), we find
  \begin{equation*}
    I_{\KX}(q) = e^{I_1(q) \frac Dz} J_{\KX}(0, Q)
  \end{equation*}
  with respect to the mirror map \eqref{eq:mirror-map}.
  
  We conclude
  \begin{align*}
    F^\loc(Q)
    =& - \Res_{z = 0} \int_X J_{\KX}(0, Q) \\
    =& - \Res_{z = 0} \int_X e^{-I_1(q) \frac Dz} I_{\KX}(q) \\
    =& -d_X\left(I_2(q) - \frac 12 I_1^2(q)\right).
  \end{align*}
\end{proof}
\begin{theorem}
  \label{prop:naive-to-log}
  We have the identity
  \begin{equation*}
    F(q)|_{q^\beta \mapsto (-1)^{\beta \cdot D} q^\beta} - \frac{d_X}2 I_1^2(q)= -F^\loc(Q)
  \end{equation*}
  under the mirror map \eqref{eq:mirror-map}.
\end{theorem}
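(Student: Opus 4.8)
The plan is to obtain Theorem~\ref{prop:naive-to-log} as a short formal consequence of Corollary~\ref{cor:naive-vs-loc} and Proposition~\ref{prop:compute-Floc}, which between them already carry all of the geometric input (the computation of $F(q)$ via the hypergeometric modification, and the local mirror symmetry comparison).

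First I would recall from Corollary~\ref{cor:naive-vs-loc} that
\begin{equation*}
  F(q) = d_X\, I_2(q)\big|_{q^\beta \mapsto (-1)^{\beta \cdot D} q^\beta}.
\end{equation*}
The observation driving the proof is that the operation $q^\beta \mapsto (-1)^{\beta\cdot D}q^\beta$ on the Novikov ring $\Lambda$ is an involution: applying it twice multiplies $q^\beta$ by $(-1)^{2(\beta\cdot D)} = 1$, since $\beta\cdot D \in \ZZ$. Applying this substitution to both sides of the displayed identity therefore gives
\begin{equation*}
  F(q)\big|_{q^\beta \mapsto (-1)^{\beta \cdot D} q^\beta} = d_X\, I_2(q).
\end{equation*}
Subtracting $\tfrac{d_X}{2} I_1^2(q)$ from both sides then yields
\begin{equation*}
  F(q)\big|_{q^\beta \mapsto (-1)^{\beta \cdot D} q^\beta} - \frac{d_X}{2} I_1^2(q) = d_X\Bigl(I_2(q) - \tfrac12 I_1^2(q)\Bigr),
\end{equation*}
and Proposition~\ref{prop:compute-Floc} identifies the right-hand side with $-F^\loc(Q)$ under the mirror map~\eqref{eq:mirror-map}. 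This is exactly the claimed identity.

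There is essentially no obstacle remaining at this stage; the genuine work lives in the preceding results — the expansion of the hypergeometric modification $I_{\KX}(q)$ in powers of $z$ yielding the coefficients $I_1, I_2$, and Proposition~\ref{prop:compute-Floc}, whose proof rests on the fact from \cite{CoGi07} that $I_{\KX}(q)$ lies on Givental's Lagrangian cone for local $X$ together with repeated use of the divisor equation. The only point in the final deduction requiring care is the bookkeeping of the sign substitution, and the key fact that makes it harmless is precisely that $(-1)^{\beta\cdot D}$ squares to $1$. One could alternatively state the theorem without the substitution by twisting the definition of $F$, but the present form is the one matching the subsequent comparison with the log Gromov--Witten generating series $F^{\log}$ in Corollary~\ref{cor:naive-to-log}.
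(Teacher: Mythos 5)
Your proof is correct and is essentially the paper's own argument: the paper's proof of Theorem~\ref{prop:naive-to-log} is precisely ``combine Corollary~\ref{cor:naive-vs-loc} and Proposition~\ref{prop:compute-Floc},'' and your write-up just makes explicit the (harmless) bookkeeping that $q^\beta \mapsto (-1)^{\beta\cdot D} q^\beta$ is an involution. Nothing is missing.
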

\begin{proof}
  This follows by combining Corollary~\ref{cor:naive-vs-loc} and Proposition~\ref{prop:compute-Floc}.
\end{proof}

Let us now compare to the result \cite{GGR19}.
The authors use slightly different local Gromov--Witten invariants
\begin{equation*}
  \deg [\bcM_{0, 0}(\KX, \beta)]^\vir.
\end{equation*}
The divisor equation connects these to the local Gromov--Witten
invariants that define $F^\loc$:
\begin{equation*}
  \int_{[\bcM_{0, 1}(\KX, \beta)]^\vir} \ev_1^*(D) = (\beta \cdot D) \deg [\bcM_{0, 0}(\KX, \beta)]^\vir.
\end{equation*}
Furthermore, we form the generating series
\begin{equation*}
  F^{\log}(q) = \sum_\beta q^\beta n_{\beta}^{\log}
\end{equation*}
of log Gromov--Witten invariants $n_{\beta}^{\log}$ of $X$ with maximal contact along $D$ at a single marked point.
It follows from \cite[Theorem~1.1]{GGR19} that
\begin{equation*}
  F^{\log}(Q)|_{Q^\beta \mapsto (-1)^{\beta \cdot D} Q^\beta} = - F^{\loc}(Q) .
\end{equation*}

Therefore, we obtain the following corollary.

\begin{corollary}
  \label{cor:naive-to-log}
  We have the identity
  \begin{equation*}
    F(q)|_{q^\beta \mapsto (-1)^{\beta \cdot D} q^\beta} - \frac{d_X}2 I_1^2(q) = F^{\log}(Q)|_{Q^\beta \mapsto (-1)^{\beta \cdot D} Q^\beta}
  \end{equation*}
  between $F$ and $F^{\log}$ under the mirror transformation \eqref{eq:mirror-map}.
\end{corollary}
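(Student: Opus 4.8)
The plan is to obtain the corollary as a formal composition of Theorem~\ref{prop:naive-to-log} with the genus-zero log--local correspondence of \cite{GGR19}; this is a bookkeeping argument rather than a new computation. Theorem~\ref{prop:naive-to-log} already rewrites the left-hand side as
\[
  F(q)\big|_{q^\beta \mapsto (-1)^{\beta\cdot D} q^\beta} - \frac{d_X}{2} I_1^2(q) = -F^\loc(Q)
\]
under the mirror map \eqref{eq:mirror-map}, so all that remains is to identify $-F^\loc(Q)$ with $F^{\log}(Q)\big|_{Q^\beta \mapsto (-1)^{\beta\cdot D} Q^\beta}$ as elements of $\Lambda$.

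First I would reconcile the two normalizations of the local invariants: $F^\loc$ is built from $\int_{[\bcM_{0,1}(\KX,\beta)]^\vir}\ev_1^*(D)$, whereas \cite{GGR19} is phrased in terms of $\deg[\bcM_{0,0}(\KX,\beta)]^\vir$, and the genus-zero divisor equation on $\KX$ gives $\int_{[\bcM_{0,1}(\KX,\beta)]^\vir}\ev_1^*(D) = (\beta\cdot D)\deg[\bcM_{0,0}(\KX,\beta)]^\vir$, so $F^\loc$ is the generating series of the $(\beta\cdot D)\deg[\bcM_{0,0}(\KX,\beta)]^\vir$. Next I would invoke \cite[Theorem~1.1]{GGR19}---whose hypotheses are met since a general anticanonical divisor $D$ on a del Pezzo surface is a smooth (indeed ample) curve---to identify the genus-zero maximal-contact log invariant $n_\beta^{\log}$ with $(-1)^{\beta\cdot D - 1}(\beta\cdot D)\deg[\bcM_{0,0}(\KX,\beta)]^\vir$. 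Summing against $q^\beta$ and renaming the formal variable to $Q$, the factor $(-1)^{\beta\cdot D - 1}$ combines with the twist $Q^\beta \mapsto (-1)^{\beta\cdot D} Q^\beta$ to produce the overall minus sign, yielding the relation $F^{\log}(Q)\big|_{Q^\beta \mapsto (-1)^{\beta\cdot D} Q^\beta} = -F^\loc(Q)$ already recorded just before the corollary.

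Finally I would chain the two identities---which is legitimate because both are stated under the single change of variables \eqref{eq:mirror-map}, the same one used in Proposition~\ref{prop:compute-Floc} and in Givental's formalism for $\KX$---to get
\[
  F(q)\big|_{q^\beta \mapsto (-1)^{\beta\cdot D} q^\beta} - \frac{d_X}{2} I_1^2(q) = -F^\loc(Q) = F^{\log}(Q)\big|_{Q^\beta \mapsto (-1)^{\beta\cdot D} Q^\beta},
\]
which is the asserted formula. The \emph{main obstacle}, modest as it is, is simply to confirm that ``$F^\loc(Q)$'' denotes the same element of $\Lambda$ in Theorem~\ref{prop:naive-to-log} and in the \cite{GGR19} comparison, and that the sign twists and the divisor-equation factor line up consistently; once that is checked the corollary is immediate.
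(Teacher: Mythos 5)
Your proposal is correct and follows essentially the same route as the paper: the text preceding the corollary carries out exactly your bookkeeping (the divisor equation relating $\int_{[\bcM_{0,1}(\KX,\beta)]^\vir}\ev_1^*(D)$ to $(\beta\cdot D)\deg[\bcM_{0,0}(\KX,\beta)]^\vir$, then \cite[Theorem~1.1]{GGR19} giving $F^{\log}(Q)|_{Q^\beta\mapsto(-1)^{\beta\cdot D}Q^\beta}=-F^{\loc}(Q)$), and the corollary is obtained by chaining this with Theorem~\ref{prop:naive-to-log} under the mirror map \eqref{eq:mirror-map}, exactly as you do. Your additional verification of the sign $(-1)^{\beta\cdot D-1}$ and of the hypotheses of \cite{GGR19} for a smooth anticanonical divisor is consistent with the paper's (more terse) presentation.
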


\bibliographystyle{plain}
\bibliography{biblio}

\end{document}